\def\bb0{\mathbb{0}}
\def\EE{\mathbb{E}}
\def\RR{\mathbb{R}}
\def\si{{\sigma}}
\def\si{{\sigma}}
\def\beq{\begin{eqnarray}}
\def\eeq{\end{eqnarray}}
\def\bsp{\begin{equation}
\begin{split}}
\def\esp{\end{split}
\end{equation}}
\def\ba{\begin{align}}
\def\ea{\end{align}}
\newcommand{\lc}{\left(}
\newcommand{\rc}{\right)}
\newcommand{\lk}{\left[}
\newcommand{\rk}{\right]}
\newcommand{\ltd}{\left\{ }
\newcommand{\rtd}{\right\} }
\theoremstyle{Def}
\numberwithin{Thm}{section}
\numberwithin{equation}{section}
\newtheorem{Theorem}{Theorem}
\newtheorem{theorem}[Theorem]{Theorem}
\newtheorem{definition}[Theorem]{Definition}
\newtheorem{lemma}[Theorem]{Lemma}
\newtheorem{corollary}[Theorem]{Corollary}
\newtheorem{proposition}[Theorem]{Proposition}
\newtheorem{example}[Theorem]{Example}
\newtheorem{remark}[Theorem]{Remark}
\newcommand {\bX} {{\bf{X}}}
\newcommand {\bZ} {{\bf{Z}}}
\newcommand {\bt} {{\bf{t}}}
\newcommand {\bR} {{\bf{R}}}
\newcommand {\E} {\mathbb{E}}
\newcommand {\Var} {\text{Var}}
\newcommand {\DP} {\text{DP}}
\newcommand {\Cov} {\text{Cov}}
\begin{document}

\def\spacingset#1{\renewcommand{\baselinestretch}%
{#1}\small\normalsize} \spacingset{1}


%

\title{\bf Large parameter asymptotic analysis for homogeneous normalized random measures with independent increments}
\author[1]{Junxi Zhang \thanks{junxi3@ualberta.ca}}
\author[2]{Shui Feng \thanks{shuifeng@mcmaster.ca}}
\author[3]{Yaozhong Hu \thanks{yaozhong@ualberta.ca}}

\affil[1, 3]{Department of Mathematical and Statistical Sciences,  University of Alberta, Edmonton, Alberta, Canada, T6G 2G1}
\affil[2]{Department of Mathematics and Statistics, McMaster University, Hamilton, Ontario, Canada, L8S 4L8}

\maketitle

\bigskip
\begin{abstract}
Homogeneous normalized random measures with independent increments (hNRMIs) represent a broad class of Bayesian nonparametric priors and thus  are widely used.
In this paper, we obtain the strong law of large numbers, the central limit theorem and the functional central limit theorem of hNRMIs when the concentration parameter $a$ approaches infinity. To quantify the convergence rate of the obtained central limit theorem, we further study  the Berry-Esseen bound, which turns out to be of the form $O \lc \frac{1}{\sqrt{a}}\rc$. As an application of the central limit theorem, we  present  the functional delta method, which can be employed  to obtain the limit of the  quantile process of hNRMIs. As an illustration of the central limit theorems, we  demonstrate  the  convergence numerically for the Dirichlet processes and the normalized inverse Gaussian processes with various choices of the concentration parameters.
\end{abstract}

\noindent%
{\it Keywords:}   Homogeneous normalized random measures with independent increments; strong law of large numbers; central limit theorem; Berry-Esseen bound.
 
\spacingset{1} 
\section{Introduction}\label{sec:intro}
Normalized random measures with independent increments (NRMIs)  \citep{Regazzini_2003} represent a large class of Bayesian nonparametric priors and thus have undergone extensive study in Bayesian nonparametrics  community. In particular, many widely used Bayesian nonparametric priors belong to the homogeneous NRMIs class. 
 The notable members 
 include   the  Dirichlet process \citep{ferguson1973}, the $\sigma$-stable NRMIs \citep{kingman1975}, the normalized inverse Gaussian process \citep{lijoi2005b}, the normalized generalized gamma process \citep{lijoi2003normalized, lijoi_2007},  the generalized Dirichlet process \citep{lijoi2005a}, and 
 many more other useful Bayesian nonparametric priors.
 We refer to \citet{hjort2003topics,  lijoi2010models, ghosal2017fundamentals, huzhangreview} for a review of these processes. Many properties of NRMIs have been discovered, for example, the moments are calculated in \citet{james2006conjugacy}, the posterior analysis is presented in \citet{james2009}, the stick-breaking representation of hNRMIs is given in \citet{favaro2016}, the large sample asymptotic analysis is provided in \citet{junxi2023large}.

Asymptotic results, such as  the strong law of large numbers, the central limit theorem, the Berry-Esseen theorem, 
are always  central topics in statistics and in probability theory. Without exception, the  
  asymptotic behaviors  of Bayesian nonparametric priors play crucial roles in Bayesian nonparametric analysis, for example in the construction of credible sets and in functional estimations.  In the Bayesian nonparametric framework, researchers consider asymptotic theorems in two cases: large sample asymptotic analysis and large parameter asymptotic analysis. For the former case, many important contributions have been made, for example, posterior consistency analysis for Bayesian nonparametric priors \citep{ghosal1999consistency, james2008, jang2010, Blasi_2013, junxi2023large}, posterior consistency for the Dirichlet process mixture model in density estimation \citep{ghosal1999posterior, lijoi2005consistency}, Bernstein-von Mises theorems \citep{lo1983, lo1986remark, kim2004, ray2021bernstein, huzhang2022functional,james2008, franssen2022bernstein, junxi2023large} and more. For the latter case, asymptotic theorems are obtained when model parameter is large (or small). In this work, we aim to give asymptotic analysis of hNRMIs when their concentration parameter $a$ approaches infinity. It is worth pointing out that the concentration parameter (or total mass) $a$ for Bayesian nonparametric priors is always very important:  it represents how much confidence one places in the prior information.  For example, if  $\{X_i\}_{i=1}^n$ is  a sample from the Dirichlet process
$\DP(a, H)$, then it is known that  the posterior mean   is $\mathbb{E}\left[P(\cdot)|X_1,\cdots,X_n\right]=\frac{a}{a+n}H(\cdot)+\frac{n}{a+n}\frac{\sum_{i=1}^nX_i}{n}$ and
 the predictive distribution is $\mathbb{P}\left(X_{n+1}\in \cdot|X_1, \cdots, X_n \right)=\frac{a}{a+n}H(\cdot)+\frac{n}{a+n}\frac{\sum_{i=1}^n\delta_{X_i}(\cdot)}{n}$,   
which means that  $a$ plays the key role of  the weight of the prior information. As for the interpretation of the concentration parameter in analysis, it   is of the same order as the inverse of the variance of the  priors (see e.g., Proposition \ref{prop: order of var} in Section \ref{sec.main results} and Remark 3.6 in \citet{huzhang2022functional}). In applications, the concentration parameter   has  diverse  specific meanings, for example, it represents the mutation rate of species in the context of population genetics when the Poisson-Dirichlet distribution is used \citep{dawson2006, feng2010poisson}. 
 Let us mention the following inspiring works pioneered our current research. When the concentration parameter $a \rightarrow \infty$, the strong law of large numbers, the central limit theorem and functional central limit theorem  are obtained for the normalized inverse Gaussian process by \citet{labadi2013}, for the Dirichlet process by \citet{labadi2016}, for stick-breaking priors by \citet{huzhang2022functional}. Weak convergence of the Dirichlet process when the corresponding parameter measure ($\alpha=aH$ in this paper) goes to another measure (could be a zero measure) is discussed in 
 \citet{sethuraman1982} (see e.g., Chapter 4 in \citet{ghosal2017fundamentals} for more details). \citet{dawson2006} present
 the large deviation principle
 for the Poisson-Dirichlet distribution (see e.g., \citep{feng2010poisson} for more details) and give the explicit rate functions when the concentration parameter $a \rightarrow \infty$.
 
 In this paper, we obtain the strong law of large numbers, the Glivenko-Cantelli theorem, the central limit theorem, the functional central limit theorem and the Berry-Esseen bound for the convergence of hNRMIs when the concentration parameter $a \rightarrow \infty$.  Let us mention that  the central limit theorem and functional central limit theorem have been studied  for the Dirichlet process by \citet{labadi2016}, for the normalized inverse Gaussian process by \citet{labadi2013}. Their method   relies  on the marginal distributions of these processes, 
however, most Bayesian nonparametric priors have no explicit marginal distributions. 
 \citet{huzhang2022functional} prove   the strong law of large numbers, the central limit theorems and functional central limit theorems  for a general class, stick-breaking processes,     by using the method of moments     through their stick-breaking representations.  
 This method of moments  is hard to apply to more general    hNRMIs since their moments are complicated  to calculate and use.
Thus, this work  
 makes two new contributions compared to  existing research:  we establish the law of large numbers, the central limit theorem, the functional central limit theorem  for quite general   hNRMIs; and we provide  the Berry-Esseen bound 
 for general hNRMIs, which  has not been discussed in any   previous work.

The organization  of this  paper is as follows. In  Section \ref{sec.NRMIs}, we briefly introduce  the    hNRMIs and   a decomposition of the completely random measures when they are used in the construction of hNRMIs. It is worth pointing out that the proposed decomposition  provides a convenient way to show the asymptotic theorems of hNRMIs.  In Section \ref{sec.main results}, we obtain the strong law of large numbers, the Glivenko-Cantelli theorem, the central limit theorem, the functional central limit theorem and the Berry-Esseen bound for hNRMIs. One direct application of the functional central limit is the Delta method, which can be  used  to find the limit  of the quantile process of hNRMIs on $\mathbb{R}$.  In Section \ref{sec. numerical}, we   numerically  illustrate the    convergence in central limit theorem   along with  $a\to \infty$
for the Dirichlet processes  and the normalized inverse Gaussian processes .  Finally,  Section \ref{sec.discussion} concludes  this paper  with  some problems   that can be studied in the future. For ease of reading, we defer all proofs to  the supplementary material.

\section{Normalized random measures with independent increments}\label{sec.NRMIs}
\subsection{Construction of NRMIs}

Completely random measure (CRM) is introduced by \citet{kingman1967completely} (see  \citep{kingman1993poisson} for more details). It plays an important role in the construction of NRMIs. 

Let $(\Omega, \mathcal{F}, \mathbb{P})$ be any probability space. Let $\mathbb{X}$ be a complete, separable metric space whose Borel $\sigma$-algebra is denoted by $\mathcal{X}$ and let $(\mathbb{M}_{\mathbb{X}}, \mathcal{M}_{\mathbb{X}})$ be the space of all probability measures on $\mathbb{X}$. We further denote $\mathbb{B}_{\mathbb{X}}$ as the space of  boundedly finite  measures on $(\mathbb{X},\mathcal{X})$, namely, for any $\mu \in \mathbb{B}_{\mathbb{X}}$ and any bounded set $A \in \mathcal{X}$ one has $\mu(A) <\infty$. Let $\mathbb{B}_{\mathbb{X}}$ be endowed with the vague or the weak hash topology (convergence of integrals of bounded continuous functions vanishing at infinity) with the Borel $\sigma$-algebra $\mathcal{B}_{\mathbb{X}}$  \citep{dalay2008book}. For the space $\mathbb{S}=\mathbb{R}^+ \times \mathbb{X}$, we  denote its Borel $\sigma$-algebra by $\mathcal{S}$.

\begin{definition}\label{def: CRM} 
Let $\tilde{\mu}$ be a measurable function 
 from $(\Omega, \mathcal{F}, \mathbb{P})$ into $(\mathbb{B}_{\mathbb{X}}, \mathcal{B}_{\mathbb{X}})$. If the random variables $\tilde{\mu}(A_1), \cdots, \tilde{\mu}(A_d)$ are mutually independent, for any pairwise disjoint sets $A_1,\cdots,A_d$ in $\mathcal{X}$ and any finite integer $d\geq 2$. Then, $\tilde{\mu}$ is termed a completely random measure (CRM).
\end{definition}

As is shown in \citep{kingman1967completely, kingman1975}, each CRM can be decomposed into three terms: a deterministic term, an atomic term with deterministic atoms, and an atomic  term with no fixed atoms. In this paper, we focus on  CRMs that are  almost surely discrete measures with probability one.   This essentially entails the discreteness of random measures obtained as transformations of CRMs. To construct such CRMs, one usually uses the functional of Poisson random measures defined as follows.   A Poisson random measure $\tilde{N}$ on  $\mathbb{S}$   with 
 mean measure $\nu(ds,dx)$ is a random measure  from   $ \Omega\times \mathbb{S} $ to $\mathbb{R}^+$   satisfying:
\begin{enumerate}
\item[(i)] $\tilde{N}(B) \sim \text{Poisson}(\nu(B))$ for any $B$ in $\mathcal{S}$ such that $\nu(B)<\infty$; 
\item[(ii)] for any pairwise disjoint sets $B_1,\cdots,B_m$ in $\mathcal{S}$, the random variables $\tilde{N}(B_1), \cdots, \\
\tilde{N}(B_m)$ are mutually independent.  
\end{enumerate}
The Poisson mean measure $\nu$  satisfies  the condition (see  \citep{dalay2008book} for details of Poisson random measures) that 
\begin{align}
\int_0^{\infty}\int_{\mathbb{X}} \min (s,1)\nu(ds,dx) <\infty\,.\label{condition.finite}
\end{align}
Let  $\tilde{\mu}$ be a random measure defined on $(\Omega, \mathcal{F}, \mathbb{P})$ that takes values in $(\mathbb{B}_{\mathbb{X}},\mathcal{B}_{\mathbb{X}})$ defined as follows.
\begin{align}
\tilde{\mu}(A):=\int_0^{\infty}\int_A s \tilde{N}(ds, dx), \quad \forall A \in \mathcal{X}\,. \label{random measure definition}
\end{align}
It is straightforward to verify that $\tilde{\mu}$ is a completely random measure.  It is also known that  $\tilde{\mu}$  is  uniquely characterized by its Laplace transform \citep{kingman1967completely, kingman1975} as follows. For any $A \in \mathcal{X}$ and any measurable function $h: \mathbb{X} \rightarrow \mathbb{R}$ such that $\int |h| d\tilde{\mu}< \infty$, we have
\begin{align}
\mathbb{E}\left[e^{-\int_A h(x) \tilde{\mu}(dx)}\right]=\exp \left\{-\int_0^{\infty}\int_A\left[ 1-e^{- s h(x)}\right]\nu(ds, dx)\right\}\,.   \label{eq: laplace}
\end{align}
The measure $\nu$ characterizing $\tilde{\mu}$ is termed as   the \textit{L{\'e}vy intensity} of $\tilde{\mu}$.
 
As commonly assumed in many studies and throughout this paper, we shall assume that the L{\'e}vy intensity can be factorized as 
\begin{align}
\nu(ds, dx)=\rho(ds)\alpha(dx)\,,\label{eq: homogeneous}
\end{align}
 where $\rho: \mathcal{B}(\mathbb{R}^+)\to  \mathbb{R}^+$ is some measure on $\mathbb{R}^+$ and $\alpha$ is a non-atomic measure on $(\mathbb{X},\mathcal{X})$ so  that $\alpha(\mathbb{X})=a <\infty$. The total mass $a=\alpha(\mathbb{X})$ is called the {\it concentration parameter}. Usually,  the finite measure $\alpha$ is written in the form $\alpha(dx)=aH(dx)$ for some   probability measure $H$ and the concentration parameter $a \in (0, \infty)$. A CRM with L{\'e}vy intensity satisfying \eqref{eq: homogeneous} is called a {\it homogeneous} CRM and will be denoted as $\tilde{\mu}_a$ hereafter.

To construct hNRMIs,   the above   homogeneous CRM is normalized, and thus one needs  $\tilde{\mu}_a(\mathbb{X})$ to be finite and positive almost surely. The finiteness is guaranteed by the condition \eqref{condition.finite} while the positivity follows from the condition  $\rho(\mathbb{R}^+)=\infty$ (see p.563 in \citet{Regazzini_2003}).   Based on  the above constructions, an  hNRMI $P$ on $(\mathbb{X},\mathcal{X})$ is a random probability measure defined by
\begin{align}
P(\cdot)=\frac{\tilde{\mu}_a(\cdot)}{\tilde{\mu}_a(\mathbb{X})}\,, \label{eq: hNRMIs}
\end{align}
where we omit $a$ for $P$ for notional simplicity. Moreover, $P$ is discrete almost surely due to the discreteness of $\tilde{\mu}_a$. The measure $\rho(ds)$ in the L{\'e}vy intensity characterizes the corresponding hNRMI. For instance, if $\rho(ds)=\frac{e^{-s}}{s}ds$, $P$ is the Dirichlet process; if $\rho(ds)=\sum_{i=1}^{\gamma}\frac{e^{-i s}}{s}ds$ with some positive integer $\gamma$, $P$ is the generalized Dirichlet process; if $\rho(ds)=\frac{\sigma}{\Gamma(1-\sigma)s^{1+\sigma}}ds$ with $\sigma \in (0,1)$, $P$ is the $\sigma$-stable process; if $\rho(dx)=\frac{e^{-\frac{1}{2} s}}{\sqrt{2\pi}s^{3/2}}ds$, $P$ is the normalized inverse Gaussian process; more generally, if $\rho(ds)=\frac{\sigma e^{-\theta s}}{\Gamma(1-\sigma)s^{1+\sigma}}ds$ with $\sigma \in (0,1)$, $\theta >0$, $P$ is the normalized generalized gamma process.
\subsection{Representation  of CRMs}\label{subsec.decomposition}
When the concentration parameter $a$ approaches infinity, it is trivial to see the expectation of a homogeneous CRM $\tilde{\mu}_a$ with L{\'e}vy intensity $\nu(ds,dx)=\rho(ds)aH(dx)$ also approaches  infinity as follows. For any $B \in \mathcal{X}$ such that $H(B)>0$,
\begin{align}
\E \lk \tilde{\mu}_a(B) \rk&=-\frac{d}{d \lambda}\E\lk e^{-\lambda \tilde{\mu}_a(B)} \rk \bigg|_{\lambda=0}\nonumber\\
&=-\frac{d}{d \lambda} \exp \ltd -aH(B)\int_0^{\infty} \lc 1-e^{-\lambda s} \rc\rho(ds) \rtd \bigg|_{\lambda=0}=aH(B)\tau_1\,,
\end{align}
where and in the remaining part of this work $\tau_k=\int_0^{\infty} s^k \rho(ds)$ for any positive integer $k$. Intuitively, when the concentration parameter $a \to \infty$, an hNRMI should converges to $H$. To show this convergence, we   introduce the following representation  of a homogeneous CRM. 

Let $N(d\zeta, ds,dx)$ be a Poisson random measures on $[0,\infty) \times \mathbb{S}$ with finite mean measure $\nu_0(d\zeta, ds,dx)=d\zeta \rho(ds)H(dx)$ such that $\int_0^{\infty} \min (s,1) \rho(ds)<\infty$ and $\rho(\mathbb{R}^+)=\infty$. For any $A \in \mathcal{X}$, define
\begin{align}
\tilde{\mu}_a(A):=\int_0^a \int_0^{\infty} \int_A sN(d\zeta, ds,dx)\,.\label{e.2.7} 
\end{align}
Therefore, for any measurable function $h: \mathbb{X}\rightarrow \mathbb{R}$ such that $\int |h| d\tilde{\mu}_a <\infty$, we have
\begin{align*}
\mathbb{E}\left[e^{-\int_A h(x) \tilde{\mu}_a(dx)}\right]=
\exp \left\{-a\int_0^{\infty}\int_A\left[ 1-e^{- s h(x)}\right]\rho(ds)H(dx)\right\}\,.
\end{align*}
This  $\tilde{\mu}_a$ has the L{\'e}vy intensity $\nu(ds,dx)=\rho(ds)aH(dx)$. In the following we shall  study the limit of  $\tilde \mu_a$   
defined by \eqref{e.2.7}.  It is easy to see $\tilde{\mu}_0$ is $0$. On the other hand, due to the fact that $N(B_1)$ is independent of $N(B_2)$ for disjoint $B_1$ and $B_2$ in the Borel $\sigma$-algebra of $[0,\infty)\times \mathbb{S}$,   $\tilde{\mu}_b-\tilde{\mu}_a$ is independent of $\tilde{\mu}_a$ for any $b>a$. 
Based on the above construction of $\tilde{\mu}_a$, one has the following decomposition.
\begin{align}
\tilde{\mu}_a=\sum_{m=1}^{\lfloor a \rfloor} \mu_m + \mu_{\Delta}\,,\label{e.2.8} 
\end{align}
where $\lfloor a \rfloor$ is the largest integer that is smaller than or equal to $a$ and we denote $\mu_m=\tilde{\mu}_m -\tilde{\mu}_{m-1}$, $\mu_{\Delta}=\tilde{\mu}_a-\tilde{\mu}_{\lfloor a \rfloor}$ with $\Delta=a-\lfloor a \rfloor \in [0,1]$. Therefore, $\{\mu_m\}_{m=1}^{\lfloor a \rfloor}$ is a sequence of independent and identically distributed CRMs that is independent of the CRM $\mu_{\Delta}$.
The reasons of introducing the above decomposition are the follows: i) $\tilde{\mu}_a$ can be written as a summation of independent CRMs for any $a>0$; ii) both $\mu_m$ and $\mu_{\Delta}$ have finite first moments when $\tau_1<\infty$ and finite second moments when  $\tau_2<\infty$. For instance, for any $A\in \mathcal{X}$ and any $m \in (1,\cdots,\lfloor a \rfloor)$,
\begin{align*}
&\E\lk \mu_m(A) \rk=\E\lk \lc \tilde{\mu}_m -\tilde{\mu}_{m-1}\rc \rk= -\frac{d}{d \lambda} \exp \ltd -H(A)\int_0^{\infty} \lc 1-e^{-\lambda s} \rc \rtd \bigg|_{\lambda=0}=H(A)\tau_1\,,\\
&\E\lk \mu_m(A)^2 \rk=\E\lk \lc \tilde{\mu}_m -\tilde{\mu}_{m-1}\rc^2 \rk\\
&\qquad\qquad\quad=\frac{d^2}{d \lambda^2} \exp \ltd -H(A)\int_0^{\infty} \lc 1-e^{-\lambda s} \rc \rtd \bigg|_{\lambda=0}=H(A)^2\tau_1^2+H(A)\tau_2\,.\\
&\E\lk \mu_{\Delta}(A) \rk=\E\lk \lc \tilde{\mu}_a-\tilde{\mu}_{\lfloor a \rfloor}\rc \rk= -\frac{d}{d \lambda} \exp \ltd -\Delta H(A)\int_0^{\infty} \lc 1-e^{-\lambda s} \rc \rtd \bigg|_{\lambda=0}=\Delta H(A)\tau_1\,,\\
&\E\lk \mu_{\Delta}(A)^2 \rk=\E\lk \lc \tilde{\mu}_a-\tilde{\mu}_{\lfloor a \rfloor}\rc^2 \rk\\
&\qquad\qquad\quad=\frac{d^2}{d \lambda^2} \exp \ltd -H(A)\int_0^{\infty} \lc 1-e^{-\lambda s} \rc \rtd \bigg|_{\lambda=0}=\Delta^2 H(A)^2\tau_1^2+\Delta H(A)\tau_2\,.
\end{align*}

Moreover, let  
\[
\tilde{\tau}_k=\int_1^{\infty} s^k \rho(ds)
\quad \hbox{ for any $k \in \mathbb{Z}^+$}
\]
 throughout the paper. It is worth pointing out that the assumption $\tilde{\tau}_k< \infty$  implies $\tau_k<\infty$ for each $k \in \mathbb{Z}^+$, due to the fact that $\int_0^{\infty} \min(s,1)\rho(ds)<\infty$.  
 From \eqref{e.2.8}  one can obtain    the following proposition for the CRM $\tilde{\mu}_M$ with a positive integer concentration parameter $M$.
\begin{proposition}\label{prop.CRM}
Let $A \in \mathcal{X}$ and let $(A_1, \cdots, A_n)$ be disjoint measurable subsets belonging  to $\mathcal{X}$ such that $H(A_i)\in (0,1)$ for all $i \in \{1,\cdots,n\}$. Denote the random vector $\bZ:=(Z_1,\cdots,Z_n)\sim N(0,\Sigma)$ with $\Sigma=(\sigma_{ij})_{1\leq i,j \leq n}$ being  given by 
\begin{align}
    \sigma_{ij}= \begin{cases}
    1 & \qquad \mbox{if $i=j$}\,,  \\
    -\sqrt{\frac{H(A_i)H(A_j)}{\left(1-H(A_i)\right)\left(1-H(A_j)\right)}}& \qquad \mbox{if $i \neq j$}\,.  \\
\end{cases} \label{eq: sigma}
\end{align}
\begin{itemize}
\item[(i)] For any positive integer $p$, we have 
\begin{align}
\E\lk \lc\mu_m(A) \rc^p \rk=\sum_{i=1}^p  H(A)^i \frac{1}{i!} \sum_{*}  {p \choose q_1, \cdots, q_i }\prod_{j=1}^i  \tau_{q_j}\,,
\end{align}
where the summation $\sum_{*}$ is taken over all vectors $(q_1,\cdots, q_i)$ of positive integers such that $\sum_{j=1}^i q_j=p$.
\item[(ii)]  $\tilde{\mu}_{\lfloor a \rfloor}(A)\leq \tilde{\mu}_a(A) \leq \tilde{\mu}_{\lceil a \rceil}(A)$ a.s., where $\lceil a \rceil$ is the smallest integer that is larger than or equal to $a$.
\item[(iii)] If $\tilde{\tau}_1 < \infty$, then $\lim_{M \rightarrow \infty} \frac{\tilde{\mu}_M(A)}{M}=H(A)\tau_1$ a.s..
\item[(iv)] If $\tilde{\tau}_2< \infty$, then $(R_M(A_1), \cdots, R_M(A_n)) \overset{d}{\rightarrow} \bZ$,  where $R_M(\cdot)=\frac{\tilde{\mu}_M(\cdot)-H(\cdot)\tilde{\mu}_M(\mathbb{X})}{\sqrt{M\tau_2H(\cdot)(1-H(\cdot))}}$.
\end{itemize}

\end{proposition}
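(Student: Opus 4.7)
For part (i), I will read the moments off the Laplace transform. Writing $\psi(\lambda) := H(A)\int_0^\infty(1-e^{-\lambda s})\rho(ds) = H(A)\sum_{k\geq 1}(-1)^{k+1}\lambda^k \tau_k/k!$, the Laplace transform is $\E[e^{-\lambda \mu_m(A)}] = e^{-\psi(\lambda)}$, and I expand it via $e^{-\psi(\lambda)} = \sum_{i\geq 0}(-\psi(\lambda))^i/i!$. Raising the inner series to the $i$-th power produces a sum over tuples $(q_1,\ldots,q_i)$ of positive integers, and extracting the coefficient of $\lambda^p$ and multiplying by $(-1)^p p!$ gives $\E[\mu_m(A)^p]$; the multinomial coefficient $\binom{p}{q_1,\ldots,q_i}$ appears from the combined factor $p!/\prod_j q_j!$. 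The exchange of summations is justified by absolute convergence in a neighborhood of $\lambda = 0$ when $\tau_p < \infty$.

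Parts (ii) and (iii) are short consequences of the representation \eqref{e.2.7}--\eqref{e.2.8}. Part (ii) is immediate since the integrand $s$ is non-negative and $N$ is a non-negative random measure, so $a \mapsto \tilde{\mu}_a(A)$ is almost surely non-decreasing. For part (iii), the decomposition \eqref{e.2.8} gives $\tilde{\mu}_M(A) = \sum_{m=1}^M \mu_m(A)$ with $\{\mu_m(A)\}_{m\geq 1}$ i.i.d.; the moment calculations in the excerpt give $\E[\mu_m(A)] = H(A)\tau_1$, and the hypothesis $\tilde{\tau}_1 < \infty$ combined with $\int_0^\infty \min(s,1)\rho(ds) < \infty$ implies $\tau_1 < \infty$. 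Kolmogorov's strong law then delivers $M^{-1}\tilde{\mu}_M(A) \to H(A)\tau_1$ almost surely.

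For part (iv), I will reduce the joint CLT to a multivariate CLT for an i.i.d.\ sum. Set $Y_m^{(i)} := \mu_m(A_i) - H(A_i)\mu_m(\mathbb{X})$, so that $\tilde{\mu}_M(A_i) - H(A_i)\tilde{\mu}_M(\mathbb{X}) = \sum_{m=1}^M Y_m^{(i)}$ and each vector $\mathbf{Y}_m = (Y_m^{(1)},\ldots,Y_m^{(n)})$ has mean zero. The covariance structure comes from two CRM properties of $\mu_m$: disjointness of $A_i$ and $A_j$ yields $\mu_m(A_i) \perp \mu_m(A_j)$, and disjointness of $A_i$ and $A_i^c$ yields $\Cov(\mu_m(A_i), \mu_m(\mathbb{X})) = \Var(\mu_m(A_i)) = H(A_i)\tau_2$. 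Combining these with the second-moment formula displayed in the excerpt gives $\Var(Y_m^{(i)}) = H(A_i)(1-H(A_i))\tau_2$ and $\Cov(Y_m^{(i)}, Y_m^{(j)}) = -H(A_i)H(A_j)\tau_2$ for $i\neq j$. Under $\tilde{\tau}_2 < \infty$ the multivariate CLT applies to $M^{-1/2}\sum_m \mathbf{Y}_m$; normalizing each coordinate by its own standard deviation $\sqrt{M\tau_2 H(A_i)(1-H(A_i))}$ converts the limiting covariance matrix into the correlation matrix $\Sigma$ of \eqref{eq: sigma}, yielding the claimed convergence to $\bZ$.

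The main obstacle I anticipate is the combinatorial bookkeeping in part (i): matching the tuple sum produced by the double power-series expansion to the stated formula with the correct prefactor $1/i!$, which reflects the unordered character of the blocks in the underlying set-partition expansion. Parts (ii)--(iv) are by contrast direct once the decomposition \eqref{e.2.8} is in hand, since it exhibits $\tilde{\mu}_M$ as an i.i.d.\ sum of CRMs for integer $M$.
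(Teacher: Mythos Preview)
Your proposal is correct and follows essentially the same route as the paper: parts (ii)--(iv) match the paper's arguments almost verbatim (monotonicity from non-negativity of $N$, Kolmogorov's SLLN on the i.i.d.\ summands $\mu_m(A)$, and the multivariate CLT applied to the centered vectors $\mathbf{Y}_m$ with the covariance computed from the CRM second-moment identities). The only cosmetic difference is in part (i), where the paper invokes Fa\`a di Bruno's formula to differentiate $e^{-H(A)\phi(\lambda)}$ and then rewrites the partition sum as an ordered-tuple sum with the $1/i!$ factor, whereas you extract the same coefficient by expanding $e^{-\psi} = \sum_i (-\psi)^i/i!$ and multiplying out $\psi^i$ directly; these are equivalent bookkeepings of the same combinatorics.
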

Result (i) in Proposition \ref{prop.CRM} indicates that the existence of the $p$-th moment of $\mu_m$ requires the finiteness of $\tau_p$ (or $\tilde{\tau}_p$). This provides the implication of the requirement of assumptions in (iii) and (iv). The proof of (ii) in the above proposition is due to the non-negativity of Poisson random measure. The results (iii) and (iv) are mainly relying on the decomposition $\tilde{\mu}_M(\cdot)=\sum_{m=1}^M \mu_m(\cdot)$. Again, assumptions $\tilde{\tau}_1 < \infty$ in result (ii) and  $\tilde{\tau}_2< \infty$ in result (iii) guarantee the finiteness of the first and second moments of $\mu_m$ for all $m \in (1,\cdots,M)$. It is worth pointing out that assumptions $\tilde{\tau}_p<\infty$ can be easily checked, as $\rho$ will be explicitly given when an hNRMI is defined (see the particular cases of hNRMIs below \eqref{eq: hNRMIs}).  Although Proposition \ref{prop.CRM} only provides the asymptotic behaviours of homogeneous CRM with an integer concentration parameter, it plays an important role in the proofs of the main results in the next section.

\section{Main results}\label{sec.main results}
In this section, we shall present multiple asymptotic theorems for hNRMIs when the concentration parameter $a \rightarrow \infty$. 
To see how the concentration parameter $a$ relates  to hNRMIs, we  state in the next proposition that the variance of hNRMIs is of order $\frac{1}{a}$ when $a$ is large.
\begin{proposition}\label{prop: order of var}
Let $P$ be an hNRMI satisfying $\tilde{\tau}_2<\infty$. For any $A\in \mathcal{A}$,
\begin{align*}
&\E[P(A)]=H(A)\,,\\
 &\Var \lc P(A) \rc=H(A)(1-H(A))\mathcal{I}_a\,,
 \end{align*}
 where 
 \begin{align}
\mathcal{I}_a=a \int_0^{\infty} \lambda \exp\ltd -a\int_0^{\infty}\lc 1-e^{-\lambda s} \rc \rho(ds) \rtd \int_0^{\infty} s^2 e^{-\lambda s}\rho(ds) d\lambda\,.\nonumber
\end{align}
  Moreover, we have $\mathcal{I}_a=O\lc \frac{1}{a}\rc $ for large $a$.
\end{proposition}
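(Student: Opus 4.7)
The plan is to reduce everything to Laplace transforms of the CRM $\tilde\mu_a$ via the identities $1/x=\int_0^\infty e^{-\lambda x}d\lambda$ and $1/x^2=\int_0^\infty\lambda e^{-\lambda x}d\lambda$ (valid for $x>0$), swap expectation and integration by Fubini, and exploit the fact that $\tilde\mu_a(A)$ and $\tilde\mu_a(A^c)$ are independent. Throughout, every joint expectation will factor along $A$ and $A^c$ and reduce to the exponent $\psi(\lambda):=\int_0^\infty(1-e^{-\lambda s})\rho(ds)$ together with $\psi'(\lambda)=\int_0^\infty s e^{-\lambda s}\rho(ds)$ and $\chi(\lambda):=-\psi''(\lambda)=\int_0^\infty s^2 e^{-\lambda s}\rho(ds)$. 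Under $\tilde\tau_2<\infty$ both $\tau_1=\psi'(0)$ and $\tau_2=\chi(0)$ are finite, $\psi$ is concave and increasing with $\psi(0)=0$ and $\psi(\infty)=\infty$ (the latter since $\rho(\mathbb{R}^+)=\infty$).

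For the mean, differentiating \eqref{eq: laplace} yields $\E[\tilde\mu_a(A)e^{-\lambda\tilde\mu_a(A)}]=aH(A)\psi'(\lambda)e^{-aH(A)\psi(\lambda)}$, and multiplying by the corresponding factor for $A^c$ then integrating over $\lambda$ with the substitution $u=a\psi(\lambda)$ collapses the result to $\int_0^\infty e^{-u}du=1$, so $\E[P(A)]=H(A)$. For the variance I would use $P(A)+P(A^c)=1$ to rewrite $\E[P(A)^2]=H(A)-\E[P(A)P(A^c)]$; independence and the second Laplace identity give
\begin{align*}
\E[P(A)P(A^c)]=H(A)\bigl(1-H(A)\bigr)\int_0^\infty \lambda\, a^2\psi'(\lambda)^2 e^{-a\psi(\lambda)}\,d\lambda\,.
\end{align*}
Using the algebraic identity $a^2\psi'(\lambda)^2 e^{-a\psi(\lambda)}=\tfrac{d}{d\lambda}\bigl[-a\psi'(\lambda)e^{-a\psi(\lambda)}\bigr]+a\chi(\lambda)e^{-a\psi(\lambda)}$ and integrating by parts against $\lambda$, the boundary terms vanish (at $0$ through the factor $\lambda$; at $\infty$ because $\psi'\leq\tau_1$ while $e^{-a\psi(\lambda)}$ decays to zero faster than any polynomial), leaving $1-\mathcal{I}_a$. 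This yields $\Var(P(A))=H(A)(1-H(A))\mathcal{I}_a$.

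For the order $\mathcal{I}_a=O(1/a)$, I would rescale by $u=a\lambda$ to get $a\mathcal{I}_a=\int_0^\infty u\,\chi(u/a)e^{-a\psi(u/a)}\,du$. Taylor expansion at $0$ gives the pointwise limits $\chi(u/a)\to\tau_2$ and $a\psi(u/a)\to\tau_1 u$. A dominating majorant comes from two observations: $\chi$ is globally bounded by $\tau_2$, and concavity of $\psi$ with $\psi'(0)=\tau_1$ yields $\psi(\lambda)\geq c\lambda$ on a neighborhood $[0,\lambda_0]$ for any $c<\tau_1$, while the complementary region $\lambda\geq\lambda_0$ contributes only an $O(e^{-a\psi(\lambda_0)})$ piece, which is negligible compared to $O(1/a)$. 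Dominated convergence then delivers $a\mathcal{I}_a\to\tau_2/\tau_1^2$.

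The main obstacle will be bookkeeping the behavior of $\psi$: verifying the boundary term $\lambda\psi'(\lambda)e^{-a\psi(\lambda)}\to 0$ at $\lambda=\infty$ requires combining the monotonicity $\psi'(\lambda)\leq\tau_1$ with $\psi(\lambda)\uparrow\infty$, and constructing the integrable majorant for the dominated convergence step requires the concavity-based lower bound $\psi(\lambda)\geq c\lambda$ to be used uniformly in $a$. Neither is deep, but both rely crucially on the standing hypothesis $\tilde\tau_2<\infty$ to guarantee finiteness of $\tau_1$ and $\tau_2$.
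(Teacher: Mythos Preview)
Your argument is correct and takes a genuinely different route from the paper's. For the mean and variance formulas the paper simply invokes Proposition~1 of \citet{james2006conjugacy}, whereas you derive them from scratch via the Laplace identities $1/x=\int_0^\infty e^{-\lambda x}d\lambda$, $1/x^2=\int_0^\infty \lambda e^{-\lambda x}d\lambda$ and an integration by parts; this is self-contained and transparent. For the asymptotic $\mathcal{I}_a=O(1/a)$ the paper writes $\mathcal{I}_a=a\int_0^\infty e^{af(\lambda)}d\lambda$ and applies Laplace's method, locating the maximizer $\lambda_0\sim 1/(a\tau_1)$ and reading off the order from $f''(\lambda_0)$. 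Your rescaling $u=a\lambda$ followed by dominated convergence is more elementary and yields the sharper conclusion $a\mathcal{I}_a\to\tau_2/\tau_1^2$, while the paper's Laplace argument is presented somewhat heuristically (the hypotheses of Laplace's method are not checked). What the paper's approach buys is brevity when one is willing to cite the moment formula and the Laplace asymptotic as black boxes.

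Two small points to tighten. First, the displayed identity has a sign slip: differentiating $-a\psi' e^{-a\psi}$ gives $a\chi e^{-a\psi}+a^2(\psi')^2 e^{-a\psi}$, so the correct decomposition is $a^2(\psi')^2 e^{-a\psi}=\tfrac{d}{d\lambda}[-a\psi' e^{-a\psi}]-a\chi e^{-a\psi}$; your stated conclusion $1-\mathcal{I}_a$ is nonetheless correct. Second, the tail claim ``contributes only $O(e^{-a\psi(\lambda_0)})$'' needs a finite reference integral: since $\psi(\lambda)$ can grow merely logarithmically (e.g.\ the Dirichlet case $\psi(\lambda)=\log(1+\lambda)$), one should bound $e^{-a\psi(\lambda)}\le e^{-(a-2)\psi(\lambda_0)}e^{-2\psi(\lambda)}$ and use that $\int_{\lambda_0}^\infty\lambda\chi(\lambda)e^{-2\psi(\lambda)}d\lambda<\infty$ (which follows from $\mathcal{I}_2<\infty$, itself a consequence of $\mathcal{I}_a\in[0,1]$). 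With that adjustment the tail is $O(a^2 e^{-(a-2)\psi(\lambda_0)})=o(1)$ and the dominated-convergence step goes through.
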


\subsection{Strong law of large numbers}
In this subsection we discuss the strong law of large numbers and the Glivenko-Cantelli theorem, namely, we want to show    the  converges of hNRMIs to its mean measure almost surely. 

Recall that $(\mathbb{M}_{\mathbb{X}}, \mathcal{M}_{\mathbb{X}})$ is the space of all probability measures on $\mathbb{X}$. To introduce the convergence of a random probability measure $P$ on $\mathbb{M}_{\mathbb{X}}$, let $\eta$ be the metric on $\mathbb{X}$ and $C_b(\mathbb{X},\eta)$ be the collection of bounded continuous functions on metric space $(\mathbb{X},\eta)$. It is well-known that when the metric space $(\mathbb{X},\eta)$ is compact, $C_b(\mathbb{X},\eta)$ equipped with 	the uniform convergence topology has a countable dense subset $\{f_i \in C_b(\mathbb{X},\eta): i\geq 1\}$. In the noncompact case when $\mathbb{X}=\mathbb{R}$, the existence of a countable dense subset $\{f_i \in C_b(\mathbb{X},\eta): i\geq 1\}$ is guaranteed when $C_b(\mathbb{X},\eta)$ is equipped with the metric $\sum_{i=1}^{\infty} 2^{-i} \frac{\sup_{x\in [0,i]} |f(x)-g(x)|}{1+\sup_{x\in [0,i]} |f(x)-g(x)|}$ for $f,g \in C_b(\mathbb{X},\eta)$. For the general Polish space $\mathbb{X}$, the space $\mathbb{X}$ is Borel isomorphic to $\mathbb{R}$ by Theorem 8.3.6 in \citet{cohn2013measure}. Therefore, there exists a metric $\tilde{\eta}$ on $\mathbb{X}$ such that all the Borel sets in $(\mathbb{X},\tilde{\eta})$ are the same as the original topology on $\mathbb{X}$. Thus, $C_b(\mathbb{X},\tilde{\eta})$ has a countable dense subset $\{f_i \in C_b(\mathbb{X},\eta): i\geq 1\}$. 

 Let $\mathbb{M}_{\mathbb{X}}$ be equipped with the weak topology and the metric 
\begin{align}
d(\xi_1, \xi_2)=\sum_{i=1}^{\infty}\frac{|\langle \xi_1-\xi_2, f_i \rangle|\wedge 1}{2^i}\quad \text{for } \xi_1,\xi_2 \in \mathbb{M}_{\mathbb{X}}\,, \label{eq: metric}
\end{align}
where $\{f_i \in C_b(\mathbb{X},\eta): i\geq 1\}$ is a  countable dense subset of $C_b(\mathbb{X},\eta)$ and $\langle \xi, f \rangle=\int f d\xi$. The existence of a  countable dense subset $\{f_i \in C_b(\mathbb{X},\eta): i\geq 1\}$ of $C_b(\mathbb{X},\eta)$ is a necessity when to define  the above metric.

\begin{theorem}\label{thm: slln}
Let $P$ be an hNRMI satisfying $\tilde{\tau}_1<\infty$. When $a \rightarrow \infty$, we have the following results.
\begin{itemize}
\item[(i)] (Strong law of large numbers) The random probability measure $P$ converges almost surely to its mean measure $H$ in $\mathbb{M}_{\mathbb{X}}$, i.e.,
\begin{align}
P \overset{a.s.}{\rightarrow} H\,, \label{eq: slln}
\end{align}
with respect to the distance $d$ defined by \eqref{eq: metric}. 
\item[(2)] (Glivenko-Cantelli strong law of large numbers) Let $(\mathbb{X},\mathcal{X})=(\mathbb{R},\mathcal{B}(\mathbb{R}))$ be the Euclidean space, we have
\begin{align}
\sup_{x\in\mathbb{R}}|P\lc(-\infty,x])-H((-\infty,x]\rc| \overset{a.s.}{\rightarrow} 0\,. \label{eq: glivenko slln}
\end{align}
\end{itemize}
\end{theorem}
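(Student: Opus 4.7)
The plan is to reduce the metric convergence in (i) to almost sure convergence $\int f\,dP\to \int f\,dH$ for each $f$ in the countable dense collection $\{f_i\}\subset C_b(\mathbb{X},\eta)$ used to define $d$. Since the $i$-th summand in $d(P,H)$ is dominated by $2^{-i}$ uniformly in $\omega$, once almost sure convergence is established for every individual $f_i$, I intersect the countably many almost sure events and apply dominated convergence to the series to conclude $d(P,H)\to 0$ almost surely.

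Fix a single bounded continuous $f$ and split $f=f^+-f^-$. Using the decomposition $\tilde{\mu}_M=\sum_{m=1}^M \mu_m$ in \eqref{e.2.8}, the variables $Y_m^{\pm}:=\int f^\pm\,d\mu_m$ are i.i.d., and Campbell's formula (equivalently the Laplace-transform calculation that yields $\mathbb{E}[\mu_m(A)]=H(A)\tau_1$) gives $\mathbb{E}[Y_m^\pm]=\tau_1\int f^\pm\,dH<\infty$, since $\tilde{\tau}_1<\infty$ implies $\tau_1<\infty$ and $f$ is bounded. Kolmogorov's SLLN then produces $M^{-1}\sum_{m=1}^M Y_m^\pm\to \tau_1\int f^\pm\,dH$ almost surely, and hence $M^{-1}\int f\,d\tilde{\mu}_M\to \tau_1\int f\,dH$ almost surely along integer $M\to\infty$.

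To pass from integer $M$ to real $a$, I exploit that $a\mapsto \int g\,d\tilde{\mu}_a$ is nondecreasing for any $g\ge 0$, by the nonnegativity of the Poisson random measure $N(d\zeta,ds,dx)$ in \eqref{e.2.7} (the function-valued analogue of Proposition \ref{prop.CRM}(ii)). Consequently
\[
\frac{1}{a}\int f^\pm\,d\tilde{\mu}_{\lfloor a\rfloor}\;\le\;\frac{1}{a}\int f^\pm\,d\tilde{\mu}_a\;\le\;\frac{1}{a}\int f^\pm\,d\tilde{\mu}_{\lceil a\rceil},
\]
and both outer expressions tend to $\tau_1\int f^\pm\,dH$ almost surely because $\lfloor a\rfloor/a,\lceil a\rceil/a\to 1$. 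Subtracting gives $a^{-1}\int f\,d\tilde{\mu}_a\to \tau_1\int f\,dH$ almost surely; applying the same sandwich with $f\equiv 1$ to the normalizer yields $a^{-1}\tilde{\mu}_a(\mathbb{X})\to \tau_1$ almost surely. Taking the ratio proves $\int f\,dP\to \int f\,dH$ almost surely, completing (i).

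For (ii), since $\alpha$ is non-atomic so is $H$, and $F(x):=H((-\infty,x])$ is continuous. Approximating $\mathbf{1}_{(-\infty,x]}$ above and below by bounded continuous functions and invoking (i) together with the continuity of $F$ yields $P((-\infty,x])\to F(x)$ almost surely for every fixed $x\in\mathbb{R}$. Selecting a countable dense $\{x_j\}\subset\mathbb{Q}$, on a single almost sure event the convergence holds for all $x_j$ simultaneously. The classical Glivenko-Cantelli upgrade to uniform convergence then applies verbatim: given $\varepsilon>0$, pick grid points $y_0<\dots<y_N$ from $\{x_j\}$ with $F(y_k)-F(y_{k-1})<\varepsilon$, and use monotonicity of both $x\mapsto P((-\infty,x])$ and $F$ to bound $|P((-\infty,x])-F(x)|\le \max_k |P((-\infty,y_k])-F(y_k)|+\varepsilon$ uniformly in $x$. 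I expect the main technical point to be the transfer from the set-indicator integer-$M$ SLLN of Proposition \ref{prop.CRM}(iii) to the function-wise real-$a$ statement needed here, which the $f^\pm$ decomposition combined with Poisson monotonicity resolves cleanly.
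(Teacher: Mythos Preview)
Your proof is correct and shares the paper's core strategy: the i.i.d.\ decomposition $\tilde{\mu}_M=\sum_{m=1}^M\mu_m$, Kolmogorov's SLLN, and the monotonicity sandwich between $\lfloor a\rfloor$ and $\lceil a\rceil$. The organizational difference is the level at which the SLLN is applied. The paper first establishes $P(A)\to H(A)$ a.s.\ for each fixed $A$ via Proposition~\ref{prop.CRM}(iii), and then handles the metric $d$ by approximating every $f_i$ by a step function $\sum_{k}\beta_k\mathbbm{1}_{A_k}$, reducing to finitely many set-wise convergences. You instead apply the SLLN directly to the i.i.d.\ real variables $\int f^{\pm}\,d\mu_m$, obtaining $\int f\,dP\to\int f\,dH$ a.s.\ for each $f_i$ in one step, intersect over the countable family $\{f_i\}$, and pass the limit under the series by domination $2^{-i}$. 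Your route is marginally cleaner for (i) and sidesteps the paper's step-function layer, whose null-set bookkeeping (the approximating sets $A_{k_i}$ depend on $\epsilon$) is left implicit; conversely, the paper's version reuses Proposition~\ref{prop.CRM} verbatim without needing the function-level Campbell-type mean computation. For (ii) the paper simply cites the classical upgrade (Billingsley, Theorem~20.6), while you write out the same monotonicity-plus-dense-grid argument explicitly; note that your indicator-by-continuous-function approximation is not actually needed, since your own SLLN argument for $a^{-1}\int g\,d\tilde{\mu}_a$ applies to any bounded nonnegative measurable $g$, in particular $g=\mathbf{1}_{(-\infty,x]}$.
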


Theorem \ref{thm: slln} provides a strong convergent result of hNRMIs as random probability measures when their concentration parameter is large. It  includes the strong law of large numbers for some particular cases of hNRMIs studied in prior works.
\begin{example}
Theorem \ref{thm: slln} includes the following existing results:
\begin{itemize}
\item[(i)] Theorem 2 in \citet{labadi2016}: $P(A) \overset{a.s.}{\to} H(A)$ for any $A \in \mathcal{X}$  when $P$ is the normalized inverse Gaussian process. The convergence is for $a=N^2$ and $N \to \infty$.
\item[(ii)] Theorem 4.2 in \citet{huzhang2022functional}: $P(A) \overset{a.s.}{\to} H(A)$ for any $A \in \mathcal{X}$  when $P$ is one of the Dirichlet process, the Generalized Dirichlet process, the normalized generalized gamma process, the normalized inverse Gaussian process. The convergence is for  $a=N^{\kappa}$ with any fixed $\kappa>0$ and $N \to \infty$.
\end{itemize}
\end{example}
It is worth pointing out that in Theorem \ref{thm: slln}, there is no assumption of the convergent rate of $a$ and the convergence is in the measure space $\mathbb{M}_{\mathbb{X}}$. This provides more applicability of our results.

\subsection{CLT and functional CLT for hNRMIs}\label{subsec: CLT}
Having establishing  the strong law of large numbers, we now consider the central limit theorem and the  functional central limit theorem for hNRMIs when $a \rightarrow \infty$. To make the presentation concise we introduce the following centered and scaled random measures of $P$.
\begin{align}
&D_a(\cdot)=\frac{\sqrt{a} \tau_1 \lc P(\cdot)-H(\cdot)\rc}{\sqrt{ H(\cdot)(1-H(\cdot))  \tau_2}}\,, \label{eq: notation D}\\
&Q_{H,a}=\frac{\sqrt{a} \tau_1 \lc P(\cdot)-H(\cdot)\rc}{\sqrt{  \tau_2}}\,. \label{eq: notation Q}
\end{align}

Let $\mathbb{B}_{H}^o$ be the Brownian bridge with parameter $H$ or $H$-Brownian bridge (see e.g., \citep{kim2003}), namely, $\mathbb{B}_{H}^o$ is a Gaussian process with $\E[\mathbb{B}_{H}^o(A)]=0$ and $\E[\mathbb{B}_{H}^o(A_1)\mathbb{B}_{H}^o(A_2)]=H(A_1 \cap A_2)-H(A_1)H(A_2)$ for any $A_1,A_2 \in \mathcal{X}$. 
  With these  notations, we are ready to give the central limit theorem and the functional central limit theorem as follows.

\begin{theorem}\label{thm: clt of NRMIs}
Let $P$ be an  hNRMI satisfying  $\tilde{\tau}_2<\infty$. When $a \rightarrow \infty$, we have the following results.
\begin{itemize}
\item[(i)] (Central limit theorem)
For any disjoint measurable subset $(A_1,\cdots, A_n)$ in $\mathcal{X}$ such that $H(A_i)\in (0,1)$ for all $i \in \{1,\cdots,n\}$, we have
\begin{equation}
(D_a(A_1), \cdots, D_a(A_n)) \overset{d}{\rightarrow} \bZ\,,\label{eq: clt of D}
\end{equation}
where $\bZ\sim N(0,\Sigma)$ with $\Sigma=(\sigma_{ij})_{1\leq i,j \leq n}$ being  given by 
\begin{align}
    \sigma_{ij}= \begin{cases}
    1 & \qquad \mbox{if $i=j$}\,,  \\
    -\sqrt{\frac{H(A_i)H(A_j)}{\left(1-H(A_i)\right)\left(1-H(A_j)\right)}}& \qquad \mbox{if $i \neq j$}\,.  \\
\end{cases} \label{eq: sigma}
\end{align}
\item[(ii)](Functional central limit theorem)
The random measure $Q_{H,a}$ converges weakly to $\mathbb{B}_H^o$ in $\mathbb{M}_{\mathbb{X}}$ with respect to the metric $d$ in \eqref{eq: metric}, i.e.,
\begin{align}
Q_{H,a} \overset{\rm weakly}{\rightarrow} \quad \mathbb{B}_H^o\,. \label{eq: clt of D}
\end{align}
In particular, if $(\mathbb{X}, \mathcal{X})=(\mathbb{R}^d, \mathcal{B}(\RR^d))$ is the $d$-dimensional Euclidean space
with the Borel $\si$-algebra. Then 
\begin{equation}
Q_{H,a}  \stackrel{{\rm weakly}}{\rightarrow }  \quad {\mathbb{B}_H^o}\qquad  {\rm in}\qquad D(\mathbb{R} ^d)\  \label{eq: functional clt} 
\end{equation}  
with respect to the Skorohod topology. 
\end{itemize}
\end{theorem}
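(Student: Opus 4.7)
The plan is to reduce everything to the central limit theorem for the underlying CRM already established in Proposition \ref{prop.CRM}(iv). Since $P=\tilde\mu_a/\tilde\mu_a(\mathbb X)$, one has the algebraic identity
\begin{align*}
D_a(A)=\frac{a\,\tau_1}{\tilde\mu_a(\mathbb X)}\cdot\frac{\tilde\mu_a(A)-H(A)\tilde\mu_a(\mathbb X)}{\sqrt{a\,\tau_2\,H(A)(1-H(A))}}.
\end{align*}
By an extension of Proposition \ref{prop.CRM}(iii) to non-integer parameter using the monotone sandwich in Proposition \ref{prop.CRM}(ii), the prefactor $a\tau_1/\tilde\mu_a(\mathbb X)$ tends to $1$ almost surely. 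Slutsky's theorem then reduces part (i) to the joint convergence of $(R_a(A_i))_{i=1}^n$ to $\mathbf Z$, where $R_a$ is defined as in Proposition \ref{prop.CRM}(iv) but with $M$ replaced by the continuous parameter $a$.

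To pass from integer to arbitrary $a$, I would exploit the decomposition $\tilde\mu_a=\tilde\mu_{\lfloor a\rfloor}+\mu_\Delta$ from (\ref{e.2.8}). The explicit moment calculations following (\ref{e.2.8}) yield $\mathbb E[\mu_\Delta(A)]=\Delta H(A)\tau_1$ and $\mathrm{Var}(\mu_\Delta(A))\le \Delta H(A)\tau_2$ uniformly in $\Delta\in[0,1]$, so the remainder $\mu_\Delta(A)-H(A)\mu_\Delta(\mathbb X)$ is bounded in $L^2$ and hence $o_P(\sqrt a)$. Consequently $R_a(A_i)-R_{\lfloor a\rfloor}(A_i)\to 0$ in probability, and Proposition \ref{prop.CRM}(iv) delivers the conclusion of part (i).

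For part (ii), the finite-dimensional convergence on arbitrary Borel sets $B_1,\dots,B_k$ follows from the disjoint-sets case just proved by refining them into the disjoint atoms of the algebra they generate, writing each $Q_{H,a}(B_j)$ as a finite linear combination of the $Q_{H,a}$ values on the atoms, and applying the continuous mapping theorem; a direct covariance computation identifies the limit as the $H$-Brownian bridge $\mathbb B_H^o$. Weak convergence in $(\mathbb M_{\mathbb X},d)$ then reduces, via a term-by-term Chebyshev estimate in (\ref{eq: metric}) combined with the uniform bound $\mathrm{Var}(Q_{H,a}(A))=O(1)$ granted by Proposition \ref{prop: order of var}, to the joint convergence of $\langle Q_{H,a},f_i\rangle$ for finitely many $i$ --- exactly the finite-dimensional statement.

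The main obstacle is tightness in the Skorohod space $D(\mathbb R^d)$ needed for (\ref{eq: functional clt}). Writing $F_{H,a}(t)=Q_{H,a}((-\infty,t])$ in the one-dimensional case (with an analogous multi-indexed version in higher dimensions), the natural route is a Billingsley-type criterion
\begin{align*}
\mathbb E\bigl[(F_{H,a}(t)-F_{H,a}(s))^2(F_{H,a}(u)-F_{H,a}(t))^2\bigr]\le C\,\bigl(H((s,u])\bigr)^{2}
\end{align*}
for $s\le t\le u$. The independent-increments structure of $\tilde\mu_a$ inherited from the Poisson random measure in (\ref{e.2.7}) decouples the CRM increments over the disjoint intervals $(s,t]$ and $(t,u]$, while the common denominator $\tilde\mu_a(\mathbb X)$ is controlled by conditioning on the high-probability event $\{\tilde\mu_a(\mathbb X)\in(a\tau_1/2,2a\tau_1)\}$ supplied by the SLLN. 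The delicate point is that a naive fourth-moment bound requires $\tilde\tau_4<\infty$, whereas only $\tilde\tau_2<\infty$ is assumed; I would circumvent this either by an Aldous-type stopping-time criterion that relies only on second-moment control of increments, or by a truncation of the Poisson atoms at a fixed jump-size level that is sent to zero at the end of the argument.
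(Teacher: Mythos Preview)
Your treatment of part (i) is essentially the paper's argument: write $D_a=R_a/V_a$, show $V_a\to 1$ (you via the SLLN sandwich from Proposition~\ref{prop.CRM}(ii)--(iii), the paper via a direct Laplace-transform expansion), absorb the non-integer residue $\mu_\Delta$ by an $L^2$ bound, and invoke Proposition~\ref{prop.CRM}(iv) for the integer part together with Slutsky. The passage from disjoint sets to general Borel sets and to $\langle Q_{H,a},f_i\rangle$ is also handled the same way, although the paper argues the weak convergence in $(\mathbb M_{\mathbb X},d)$ by testing against bounded Lipschitz functionals $g$ and approximating each $f_i$ by step functions, rather than by a tail-truncation/Chebyshev estimate in the metric as you sketch.

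The only substantive divergence is the tightness step for $D(\mathbb R^d)$. The paper does \emph{not} detour through an Aldous-type criterion or a jump-truncation argument: it verifies the Bickel--Wichura moment condition directly, computing
\[
\mathbb E\bigl[|Q_{H,a}(A)|^2|Q_{H,a}(B)|^2\bigr]
=H(A)(1-H(A))\,H(B)(1-H(B))\,\mathbb E\bigl[D_a(A)^2 D_a(B)^2\bigr]
\]
for neighbouring blocks $A,B$, evaluating the last factor by Isserlis' theorem for the Gaussian limit plus an unexplained $o(1)$, and bounding the result by $\xi(A)\xi(B)$ with $\xi=2H$. So your worry that a naive fourth-moment bound seems to need $\tilde\tau_4$ is well founded---the paper's $o(1)$ tacitly presumes convergence (hence uniform boundedness) of mixed fourth moments, which is not justified under $\tilde\tau_2$ alone. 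In short, the paper's route is more direct than the workarounds you propose, but it is also less scrupulous about exactly this point; your alternatives would, if carried out, give a cleaner proof under the stated hypothesis.
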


In result (i) in  Theorem \ref{thm: clt of NRMIs}, if $H(A_j)=0$ for some $A_j$, we shall drop the corresponding component $D_a(A_j)$, since in this case $P(A_j) \overset{p}{\to} 0$. 
The proof of Theorem \ref{thm: clt of NRMIs} mainly relies on the decomposition as we introduced in Section \ref{subsec.decomposition}. The details of the proof and the introduction of the weak convergence in \eqref{eq: functional clt} in space $D(\mathbb{R} ^d)$ are given in the supplementary material. In the next example, one can see Theorem \ref{thm: clt of NRMIs} includes the asymptotic results of multiple particular Bayesian nonparametric priors studied in prior works.
\begin{example}
Theorem \ref{thm: clt of NRMIs} includes the following known cases:
\begin{itemize}
\item[(i)] Theorem 8 in \citet{labadi2016} where $P$ is the Dirichlet process and $\mathbb{X}=\mathbb{R}$.
\item[(ii)] Theorem 1 in \citet{labadi2013} where $P$ is the normalized inverse Gaussian process and $\mathbb{X}=\mathbb{R}$.
\item[(iii)] Theorem 4.15 in \citet{huzhang2022functional} where $P$ is one of the Dirichlet process, the Generalized Dirichlet process, the generalized normalized gamma process and the normalized inverse Gaussian process.
\end{itemize}

\end{example}
The methods of deriving the central limit theorems used in \citet{labadi2013, labadi2016} and in \citet{huzhang2022functional} can not be generalized in our case when $P$ is a general  hNRMI. In fact, \citet{labadi2013, labadi2016} derive the central limit theorems through the marginal densities of the Dirichlet process and the normalized inverse Gaussian, however, the marginal densities of a general hNRMI  is  unknown. \citet{huzhang2022functional} prove  the central limit theorems through the stick-breaking representations of multiple Bayesian nonparametric priors by using the method of moments, whereas the moments of hNRMIs have complicated forms (see e.g. \citep{james2006conjugacy}) that are hard to  apply  in our case.  Although the stick-breaking representation for general hNRMI is given by Proposition 1 in \citet{favaro2016}, the method in \citet{huzhang2022functional} is not easy to apply in general,  as the stick-breaking weights $(v_i)_{i\geq 1}$ are dependent  and have complicate joint marginal distributions. Thus, the results in Theorem \ref{thm: clt of NRMIs} complements that of  \citet{huzhang2022functional} and provide a new way to study the asymptotic behaviour of Bayesian nonparametric priors.

\begin{remark}
The Dirichlet process is a well known special case of hNRMIs. Since the posterior process of the Dirichlet process is also a Dirichlet process, it follows from Theorem \ref{thm: clt of NRMIs} that  the central limit theorem and the functional central limit theorem hold for the posterior of the Dirichlet process in the following cases: (i) with large sample size and finite $a$; (ii) with large $a$ and finite sample size; (iii) with $a$ and sample size both large.
\end{remark}

One direct consequence  of the central limit theorem is the application of the  delta method. The following result for hNRMIs can be obtained from  Theorem \ref{thm: clt of NRMIs} and Theorem 3.10.4 in \citet{Vaart2023}.

\begin{corollary}\label{thm: delta method}
Let $P$ be an hNRMI satisfying $\tilde{\tau}_2<\infty$. Let  $\mathbb{D}$ be the metric space of all probability measures
on $(\mathbb{X},  \mathcal{X})$  with the total variation  distance.   
Let 
 $\phi :  \mathbb{D}  \rightarrow \mathbb{R}^d$ be a continuous functional which is   Hadamard differentiable on $\mathbb{D}$.
 Then, as $a \rightarrow \infty$, we have 
 \[
\frac{\sqrt{a}\tau_1}{\sqrt{\tau_2}}\left(\phi\left(P(\cdot)\right)-\phi\left(H(\cdot)\right)\right)\overset{weakly}{\rightarrow}\phi'_{H(\cdot)}\left(B_H^o\right)\,. 
\]
\end{corollary}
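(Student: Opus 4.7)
The plan is to invoke the abstract functional delta method (Theorem 3.10.4 in \citet{Vaart2023}) with the functional CLT of Theorem \ref{thm: clt of NRMIs}(ii) providing the required weak convergence. In that abstract result, one needs a sequence of random elements $X_a$ in a metric space, a deterministic point $\mu$, a scaling $r_a$, and a tight limit $Y$ with $r_a(X_a - \mu) \overset{\mathrm{weakly}}{\to} Y$. If $\phi$ is Hadamard differentiable at $\mu$ tangentially to a subspace containing the support of $Y$, with continuous linear derivative $\phi'_\mu$, then $r_a(\phi(X_a) - \phi(\mu)) \overset{\mathrm{weakly}}{\to} \phi'_\mu(Y)$.

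The identification with our setting is immediate: I would take $X_a = P$, $\mu = H$, $r_a = \sqrt{a}\,\tau_1/\sqrt{\tau_2}$, and $Y = \mathbb{B}_H^o$. Then $r_a(P-H)$ is exactly the centered and scaled random measure $Q_{H,a}$ introduced in \eqref{eq: notation Q}, and Theorem \ref{thm: clt of NRMIs}(ii) supplies the input
\[
Q_{H,a} \overset{\mathrm{weakly}}{\to} \mathbb{B}_H^o.
\]
Since $\phi$ is assumed Hadamard differentiable at $H$ with derivative $\phi'_H$ that is continuous and linear into $\mathbb{R}^d$, a direct application of the abstract delta method yields
\[
\frac{\sqrt{a}\,\tau_1}{\sqrt{\tau_2}}\lc \phi(P(\cdot)) - \phi(H(\cdot)) \rc \overset{\mathrm{weakly}}{\to} \phi'_{H(\cdot)}(\mathbb{B}_H^o),
\]
which is the claimed conclusion.

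The one step requiring genuine care, and the step I expect to be the main obstacle, is a compatibility check between metrics. Theorem \ref{thm: clt of NRMIs}(ii) delivers convergence in $\mathbb{M}_{\mathbb{X}}$ with the metric $d$ of \eqref{eq: metric}, whereas the corollary formulates Hadamard differentiability with respect to the total variation distance on $\mathbb{D}$. Before citing Theorem 3.10.4, I would embed $Q_{H,a}$ and the limit $\mathbb{B}_H^o$ into a common ambient space of bounded signed measures on $(\mathbb{X},\mathcal{X})$, and verify that the Hadamard tangential condition is satisfied along the relevant directions: because $\mathbb{B}_H^o$ is concentrated on a separable subspace determined by the support of $H$, and $\phi'_H$ is continuous and linear there, the tangential form of Hadamard differentiability needed to invoke the delta method follows from the stated hypothesis. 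Once this reconciliation is explicit, the rest of the proof is a one-line invocation; I do not anticipate any additional difficulty beyond making this identification of spaces and topologies precise.
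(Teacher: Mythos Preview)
Your proposal is correct and follows exactly the paper's approach: the paper states only that the corollary ``can be obtained from Theorem \ref{thm: clt of NRMIs} and Theorem 3.10.4 in \citet{Vaart2023}'' without further argument, which is precisely the invocation you describe with $X_a=P$, $\mu=H$, $r_a=\sqrt{a}\,\tau_1/\sqrt{\tau_2}$, and $Y=\mathbb{B}_H^o$. Your caution about the metric compatibility between the weak-topology metric $d$ of \eqref{eq: metric} and the total variation distance on $\mathbb{D}$ is in fact more careful than the paper, which does not address this point at all.
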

To apply the delta method again in  Corollary \ref{thm: delta method}, we can find the limiting distribution of the quantile process of hNRMIs in the following example.

\begin{example}
Let $P$ be an hNRMI on $(\mathbb{X}, \mathcal{X})$ satisfying $\tilde{\tau}_2<\infty$. 
Let   $H$  be absolutely  continuous  
 with positive derivative $h$.  
By   Example 3.10.22 of \citet{Vaart2023}, we have  
\begin{align}
\frac{\sqrt{a}\tau_1}{\sqrt{\tau_2}}\left(P^{-1}(\cdot)-H^{-1}(\cdot)\right)\overset{weakly}{\rightarrow}-\frac{B^o(\cdot)}{h\left(H^{-1}(\cdot)\right)}=G(\cdot)\,,  
\end{align} 
where $H^{-1}(s)=\inf \{t: H(t)\geq s\}$. 
  The limiting process $G$ is a Gaussian process with zero-mean and 
with covariance function 
\begin{align*}
\Cov\left(G\left((0,s]\right),G\left((0,t]\right)\right)=\frac{s \wedge t-st}{h\left(H^{-1}\left((0,s]\right)\right)h\left(H^{-1}\left((0,t]\right)\right)} 
\end{align*}
for $s,t \in \mathbb{R}$,
\end{example}
Moreover, one can obtain the conditional delta method of $P$ by using Theorem \ref{thm: clt of NRMIs} and Theorems 3.10.13-3.10.15 in \citet{Vaart2023}.

\subsection{Berry-Esseen bound}\label{subsec: Berry-Esseen}

To see the convergence rate of the central limit theorem in Theorem \ref{thm: clt of NRMIs}, we obtain the Berry-Esseen bound in the next theorem. We shall first introduce the following useful notations.
Let $F_{\bX}(\bt)$ denote the distribution function of the finite dimensional random vector $\bX$. Denote $\mathbf{D}_a:=(D_a(A_1),\cdots, D_a(A_n))$ for any disjoint $A_1, \cdots, A_n$ in $\mathcal{X}$ such that $H(A_i)\in (0,1)$ fir all $i \in \{1,\cdots,n\}$. Recall that $\bZ:=(Z_1,\cdots, Z_n)$ is the $n$ dimensional normal random vector as defined in (i) of Theorem \ref{thm: clt of NRMIs}.  

\begin{theorem}\label{thm: berry-esseen bound}
Let $P$ be an hNRMI satisfying $\tilde{\tau}_3<\infty$. When $a$ is large, we have the following results.
\begin{itemize}
\item[(i)] For any $f \in C_b(\mathbb{X}, \eta)$, there exist a positive constant $c_1$ such that 
\begin{align}
\sup_{t \in \mathbb{R}} |F_{\langle Q_{H,a}, f \rangle} (t) - F_{\langle \mathbb{B}_H^o, f \rangle} (t)| \leq \frac{c_1}{\sqrt{a}}\,, \label{eq: functional berry-esseen}
\end{align}
where $c_1$ depends on $H$, $\tau_2$ and $\tau_3$.
\item[(ii)] For any disjoint measurable subsets $(A_1, \cdots, A_n)$ of $\mathbb{X}$, there exist a positive constant $c_2$ such that
\begin{align}
\sup_{\bt \in \mathbb{R}^n} | F_{ \mathbf{D}_a}(\bt) -F_{\bZ}(\bt)| \leq \frac{c_2}{\sqrt{a}}\,,\label{eq: berry-esseen}
\end{align}
where $c_2$ depends on the dimension $n$, $H$, $\tau_2$ and $\tau_3$.
\end{itemize}

\end{theorem}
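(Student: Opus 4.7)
The plan is to convert the Berry-Esseen problem for the ratio statistic $P = \tilde{\mu}_a/\tilde{\mu}_a(\mathbb{X})$ into one for a sum of i.i.d.\ random variables (respectively random vectors) by exploiting the fact that events of the form $\{\langle Q_{H,a}, f\rangle \leq t\}$ (respectively $\{\mathbf{D}_a \leq \bt\}$) can be rewritten as half-space (respectively orthant) events for the \emph{unnormalized} CRM $\tilde{\mu}_a$. Clearing the denominator $\tilde{\mu}_a(\mathbb{X}) > 0$ a.s.\ gives
\[
\{\langle Q_{H,a}, f\rangle \leq t\}
= \left\{ \int g_t \, d\tilde{\mu}_a \leq 0 \right\},
\qquad
g_t := f - \langle H, f\rangle - c_t, \quad c_t := \frac{t\sqrt{\tau_2}}{\sqrt{a}\,\tau_1}.
\]
Since $\int g_t \, d\tilde{\mu}_a$ is a \emph{linear} functional of the CRM, the decomposition $\tilde{\mu}_a = \sum_{m=1}^{\lfloor a\rfloor} \mu_m + \mu_\Delta$ from Section \ref{subsec.decomposition} expresses it as an i.i.d.\ sum $\sum_{m=1}^{\lfloor a\rfloor} Z_m^{(t)}$ plus a bounded boundary term $B_t := \int g_t \, d\mu_\Delta$, with $Z_m^{(t)} := \int g_t \, d\mu_m$.

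For part (i), I would apply the classical one-dimensional Berry-Esseen theorem to this i.i.d.\ sum. Using Proposition \ref{prop.CRM}(i) with $p=1,2,3$ and the Poisson variance formula, one computes $\E[Z_1^{(t)}] = -t\sqrt{\tau_2}/\sqrt{a}$, $\Var(Z_1^{(t)}) = \tau_2\left(\Var_H(f) + c_t^2\right)$, and $\E|Z_1^{(t)}|^3 < \infty$ under $\tilde{\tau}_3 < \infty$. Conditioning on $B_t$ (which is independent of $\sum_m Z_m^{(t)}$) and applying the classical Berry-Esseen theorem yields
\[
\sup_{t} \left| \PP\!\left(\sum_{m=1}^{\lfloor a\rfloor} Z_m^{(t)} + B_t \leq 0 \,\Big|\, B_t\right) - \Phi\!\left(\frac{-B_t - \lfloor a\rfloor\E[Z_1^{(t)}]}{\sqrt{\lfloor a\rfloor \Var(Z_1^{(t)})}}\right)\right| \leq \frac{C}{\sqrt{a}}
\]
uniformly in $B_t$. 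Taking expectation and Taylor-expanding $\Phi$ around $\E[B_t] = O(1/\sqrt{a})$ contributes an error of order $\Var(B_t)/a = O(1/a)$. A short algebraic simplification, combined with $\sqrt{\lfloor a\rfloor/a} = 1 + O(1/a)$ and the Lipschitz continuity of $\Phi$, shows that the remaining argument of $\Phi$ equals $t/\sqrt{\Var_H(f)} + O(1/\sqrt{a})$. Since $\langle \mathbb{B}_H^o, f\rangle \sim N(0,\Var_H(f))$, this gives $F_{\langle Q_{H,a}, f\rangle}(t) = F_{\langle\mathbb{B}_H^o, f\rangle}(t) + O(1/\sqrt{a})$, which is \eqref{eq: functional berry-esseen}.

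For part (ii), the same reduction applies componentwise: each constraint $\{D_a(A_i) \leq t_i\}$ is a half-space in the $(n{+}1)$-dimensional vector $(\tilde{\mu}_a(A_1), \ldots, \tilde{\mu}_a(A_n), \tilde{\mu}_a(\mathbb{X}\setminus\cup_i A_i))$, whose components are independent (since $A_1,\ldots,A_n$ are disjoint) and, via the decomposition, take the form of i.i.d.\ sums of $\mu_m(A_1), \ldots, \mu_m(A_n), \mu_m(\mathbb{X}\setminus\cup_i A_i)$ plus a boundary. These vectors have finite third absolute moments by Proposition \ref{prop.CRM}(i) under $\tilde{\tau}_3<\infty$. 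The multidimensional Berry-Esseen inequality for intersections of half-spaces (e.g.\ G\"otze's theorem) then provides a bound $c_2/\sqrt{a}$, where $c_2$ depends on the dimension $n$, $H$, $\tau_2$ and $\tau_3$. The final step matches the asymptotic covariance of the normalized sum (computed via Proposition \ref{prop.CRM}(i)) with $\Sigma$ in \eqref{eq: sigma}, identifying the Gaussian limit as $\bZ$.

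The main obstacle is obtaining the required Berry-Esseen estimates \emph{uniformly in} $t$ (respectively $\bt$): the Berry-Esseen constant involves $\E|Z_1^{(t)} - \E[Z_1^{(t)}]|^3/\Var(Z_1^{(t)})^{3/2}$, which depends on $t$ through $c_t$. For $|t|$ in any bounded window the ratio is controlled by boundedness of $f$ and the moment formula in Proposition \ref{prop.CRM}(i); for $|t| \gtrsim \sqrt{a}$, both distribution functions are within $O(1/\sqrt{a})$ of $0$ or $1$ by Gaussian tail estimates and standard Chebyshev bounds on $\sum_m Z_m^{(t)}$, so the bound becomes trivial. In the multivariate case one must additionally control the minimal eigenvalue of the covariance matrix (after the change of variables from $P$ to $\tilde{\mu}_a$) so that G\"otze's constant does not degenerate; this is again handled by the explicit moment expressions for $\mu_m$.
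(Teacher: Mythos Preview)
Your approach is correct and takes a genuinely different route from the paper. The paper does not clear denominators: it writes $D_a(\cdot)=R_a(\cdot)/V_a$ (and $\langle Q_{H,a},f\rangle=\langle T_a,f\rangle/V_a$) with $V_a=\tilde\mu_a(\mathbb{X})/(a\tau_1)$, proves a third-moment concentration bound $\PP(|V_a-1|>a^{-1/2})\le \tau_3/(\sqrt{a}\,\tau_1^3)$, and applies Berry--Esseen (via the same i.i.d.\ decomposition of $\tilde\mu_a$) to the $t$-\emph{independent} numerator $R_a$ (resp.\ $\langle T_a,f\rangle$, after a step-function approximation of $f$). The two pieces are combined by sandwiching $\{R_a/V_a\le\bt\}$ between $\{R_a\le\bt\}$ and $\{R_a\le(1+\epsilon_a)\bt\}$ on the high-probability event $\{|V_a-1|<\epsilon_a\}$. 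Your denominator-clearing trick collapses this to a single Berry--Esseen application and avoids the step-function detour for $f$, but transfers all the work into the uniformity-in-$t$ issue you correctly flag: your summands $Z_m^{(t)}$ depend on $t$ through $c_t$, whereas the paper's $R_a$ does not, so there Berry--Esseen is automatically uniform and the only remaining $t$-dependence is the elementary Gaussian Lipschitz estimate $|F_{\bZ}((1+\epsilon_a)\bt)-F_{\bZ}(\bt)|=O(\epsilon_a)$. One imprecision in your sketch worth tightening: the argument of $\Phi$ you obtain is $t/\sqrt{\Var_H(f)+c_t^2}$, which is \emph{not} $t/\sqrt{\Var_H(f)}+O(1/\sqrt{a})$ uniformly in $t$ (for $|t|\sim a^{1/3}$ the two arguments differ by a constant); what \emph{is} uniformly $O(1/\sqrt{a})$ is the difference of the $\Phi$-values, via a Gaussian-tail bound of the type $\phi\bigl(|t|/\sqrt{\Var_H(f)+c_t^2}\bigr)\cdot|t|\,c_t^2/\Var_H(f)^{3/2}$, so your conclusion survives once this is made explicit.
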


To the best of  our knowledge, Theorem \ref{thm: berry-esseen bound} is the first work to present the Berry-Esseen bound in asymptotic studies of Bayesian nonparametric models. The theorem   states that the convergence rate of $Q_{H,a}$ to $\mathbb{B}_H^o$ and $\mathbf{D}_a$ to $\bZ$ are both $O \lc \frac{1}{\sqrt{a}} \rc$. Result (ii) in Theorem \ref{thm: berry-esseen bound} does not consider the case when the dimension $n$ is very large (in order of $a$), this high-dimensional case can be also derived by using the results in \citet{kuchibhotla2020high}.

\section{Numerical illustration}\label{sec. numerical}

To illustrate the central limit convergence in Theorem \ref{thm: clt of NRMIs} when $a \rightarrow \infty$, we   carry out some  numerical simulations to visualize  how the convergence behaves. To avoid the effect from other model parameters apart from the concentration parameter, we will run the simulations of the Dirichlet  processes and the normalized inverse Gaussian processes   for different choices of $a$.

In our simulation, we take $(\mathbb{X}, \mathcal{X})=((0,1), \mathcal{B}((0,1)))$ and let $H$ be the uniform measure on $(0,1)$. Without loss of generality, we consider the case when $n=3$ and the partition of $\mathbb{X}=(0,1)$ is given as $A_1=(0, 0.3], A_2=(0.3, 0.7], A_3=(0.7, 1)$. In our simulations of the two processes, we use the stick-breaking representations of the Dirichlet process \citep{sethuraman1994} and of the normalized inverse Gaussian process \citep{favaro2012} (see also the stick-breaking representation of hNRMIs in \citet{favaro2016}). We truncate the infinite summation of their stick-breaking representations at $3000$ to make sure the truncation error is much less than $\frac{1}{a^2}$ and generate $3000$ samples of $(D_a(A_1), D_a(A_2), D_a(A_3))$ for each $a=2, 5, 10, 20, 30$. Instead of visualizing the messy densities of $(D_a(A_1), D_a(A_2), D_a(A_3))$, we plot the densities of the linear combination $1.6D_a(A_1)+1.49D_a(A_2)+0.50D_a(A_3)$ (any other linear combinations will produce the similar results with a different variance)  in Figure \ref{pdpplot} for the Dirichlet  processes (left) and the normalized inverse Gaussian processes  (right) correspondingly. It is apparent  to observe  the convergence to  normal shape  when $a$ is getting large.

\begin{figure}
\centering     
\subfigure{\includegraphics[width=0.49\textwidth , height=60mm]{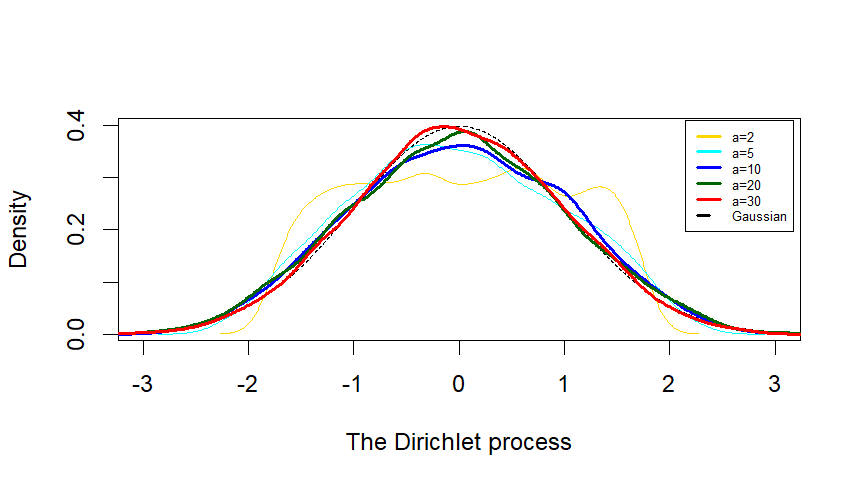}}
\subfigure{\includegraphics[width=0.49\textwidth , height=60mm]{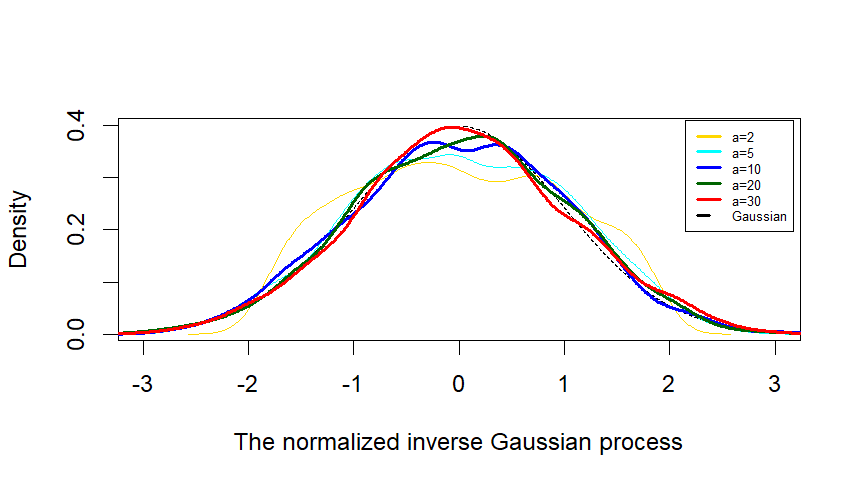}}
\caption{The convergence of $\mathbf{D}_a$ for the Dirichlet process and the normalized inverse Gaussian process when $a=2,5,10,20,30$.}\label{pdpplot}
\end{figure}
\section{Discussion}\label{sec.discussion}

We display the strong law of large numbers, the central limit theorem, the functional central limit theorem and the Berry-Esseen bound of hNRMIs when the concentration parameter $a \rightarrow \infty$. These results together with the large sample asymptotic analysis in \citet{junxi2023large} present a complete asymptotic analysis of NRMIs. Another important Bayesian nonparametric prior is  the  two-parameter Poisson-Dirichlet process (also known as the Pitman-Yor process) \citep{pitman1997, perman1992, ghosal2017fundamentals}, which is not an hNRMI but close related to hNRMIs. The asymptotic analysis of the two-parameter Poisson-Dirichlet process is given in \citet{huzhang2022functional, dawson2006} when $a \rightarrow \infty$ and in \citet{james2008, franssen2022bernstein} when the sample size is large. There are more interesting open problems along the asymptotic analysis path in Bayesian nonparametrics. For example, large sample and large concentration parameter asymptotic analysis of other general classes of Bayesian nonparametric priors, like species sampling priors \citep{pitman1996some, aldous1985exchangeability} and Gibbs-type priors \citep{de2015gibbs, gnedin2006exchangeable}.
\section*{Acknowledgments}
\bibliographystyle{Chicago}
\bibliography{huzhang}

\begin{thebibliography}{}

\bibitem[\protect\citeauthoryear{Aldous}{Aldous}{1985}]{aldous1985exchangeability}
Aldous, D.~J. (1985).
\newblock Exchangeability and related topics.
\newblock In P.~L. Hennequin (Ed.), {\em {\'E}cole d'{\'E}t{\'e} de
  Probabilit{\'e}s de Saint-Flour XIII --- 1983}, Berlin, pp.\  1--198.
  Springer.

\bibitem[\protect\citeauthoryear{Bickel and Wichura}{Bickel and
  Wichura}{1971}]{bickel1971}
Bickel, P.~J. and M.~J. Wichura (1971, oct).
\newblock Convergence criteria for multiparameter stochastic processes and some
  applications.
\newblock {\em The Annals of Mathematical Statistics\/}~{\em 42\/}(5),
  1656--1670.

\bibitem[\protect\citeauthoryear{Cohn}{Cohn}{2013}]{cohn2013measure}
Cohn, D.~L. (2013).
\newblock {\em Measure theory}, Volume~5.
\newblock Springer.

\bibitem[\protect\citeauthoryear{Daley and Vere-Jones}{Daley and
  Vere-Jones}{2008}]{dalay2008book}
Daley, D. and D.~Vere-Jones (2008).
\newblock {\em An Introduction to the Theory of Point Processes: Volume II:
  General Theory and Structure\/} (Second ed.).
\newblock Probability and its Applications. Springer, New York.

\bibitem[\protect\citeauthoryear{Dawson and Feng}{Dawson and
  Feng}{2006}]{dawson2006}
Dawson, D.~A. and S.~Feng (2006, may).
\newblock Asymptotic behavior of the {Poisson{\textendash}Dirichlet}
  distribution for large mutation rate.
\newblock {\em The Annals of Applied Probability\/}~{\em 16\/}(2), 562--582.

\bibitem[\protect\citeauthoryear{De~Blasi, Favaro, Lijoi, Mena, Prunster, and
  Ruggiero}{De~Blasi et~al.}{2015}]{de2015gibbs}
De~Blasi, P., S.~Favaro, A.~Lijoi, R.~H. Mena, I.~Prunster, and M.~Ruggiero
  (2015).
\newblock Are {G}ibbs-type priors the most natural generalization of the
  {D}irichlet process?
\newblock {\em IEEE Transactions on Pattern Analysis and Machine
  Intelligence\/}~{\em 37\/}(2), 212--229.

\bibitem[\protect\citeauthoryear{{De Blasi}, Lijoi, and Pruenster}{{De Blasi}
  et~al.}{2013}]{Blasi_2013}
{De Blasi}, P., A.~Lijoi, and I.~Pruenster (2013).
\newblock An asymptotic analysis of a class of discrete nonparametric priors.
\newblock {\em Statistica Sinica\/}~{\em 23\/}(3), 1299--1322.

\bibitem[\protect\citeauthoryear{Favaro, Lijoi, Nava, Nipoti, Pr{\"u}nster, and
  Teh}{Favaro et~al.}{2016}]{favaro2016}
Favaro, S., A.~Lijoi, C.~Nava, B.~Nipoti, I.~Pr{\"u}nster, and Y.~W. Teh (2016,
  sep).
\newblock On the stick-breaking representation for homogeneous {NRMIs}.
\newblock {\em Bayesian Analysis\/}~{\em 11\/}(3), 697--724.

\bibitem[\protect\citeauthoryear{Favaro, Lijoi, and Pr{\"u}nster}{Favaro
  et~al.}{2012}]{favaro2012}
Favaro, S., A.~Lijoi, and I.~Pr{\"u}nster (2012, jun).
\newblock On the stick-breaking representation of normalized inverse {G}aussian
  priors.
\newblock {\em Biometrika\/}~{\em 99\/}(3), 663--674.

\bibitem[\protect\citeauthoryear{Feng}{Feng}{2010}]{feng2010poisson}
Feng, S. (2010).
\newblock {\em The {P}oisson-{D}irichlet distribution and related topics:
  models and asymptotic behaviors}.
\newblock Heidelberg: Springer Berlin, Heidelberg.

\bibitem[\protect\citeauthoryear{Ferguson}{Ferguson}{1973}]{ferguson1973}
Ferguson, T.~S. (1973, mar).
\newblock A {Bayesian} analysis of some nonparametric problems.
\newblock {\em The Annals of Statistics\/}~{\em 1\/}(2), 209--230.

\bibitem[\protect\citeauthoryear{Franssen and van~der Vaart}{Franssen and
  van~der Vaart}{2022}]{franssen2022bernstein}
Franssen, S. and A.~van~der Vaart (2022).
\newblock {Bernstein-von Mises} theorem for the {Pitman-Yor} process of
  nonnegative type.
\newblock {\em Electronic Journal of Statistics\/}~{\em 16\/}(2), 5779--5811.

\bibitem[\protect\citeauthoryear{Ghosal, Ghosh, and Ramamoorthi}{Ghosal
  et~al.}{1999a}]{ghosal1999consistency}
Ghosal, S., J.~K. Ghosh, and R.~Ramamoorthi (1999a).
\newblock Consistency issues in bayesian nonparametrics.
\newblock {\em Statistics Textbooks and Monographs\/}~{\em 158}, 639--668.

\bibitem[\protect\citeauthoryear{Ghosal, Ghosh, and Ramamoorthi}{Ghosal
  et~al.}{1999b}]{ghosal1999posterior}
Ghosal, S., J.~K. Ghosh, and R.~Ramamoorthi (1999b).
\newblock Posterior consistency of dirichlet mixtures in density estimation.
\newblock {\em The Annals of Statistics\/}~{\em 27\/}(1), 143--158.

\bibitem[\protect\citeauthoryear{Ghosal and Van~der Vaart}{Ghosal and Van~der
  Vaart}{2017}]{ghosal2017fundamentals}
Ghosal, S. and A.~Van~der Vaart (2017).
\newblock {\em Fundamentals of Nonparametric {Bayesian} Inference\/} (First
  ed.), Volume~44.
\newblock Cambridge: Cambridge University Press.

\bibitem[\protect\citeauthoryear{Gnedin and Pitman}{Gnedin and
  Pitman}{2006}]{gnedin2006exchangeable}
Gnedin, A. and J.~Pitman (2006).
\newblock Exchangeable {G}ibbs partitions and stirling triangles.
\newblock {\em Journal of Mathematical Sciences\/}~{\em 138}, 5674--5685.

\bibitem[\protect\citeauthoryear{Hardy}{Hardy}{2006}]{hardy2006combinatorics}
Hardy, M. (2006).
\newblock Combinatorics of partial derivatives.
\newblock {\em the electronic journal of combinatorics\/}~{\em 13\/}(R1), 1.

\bibitem[\protect\citeauthoryear{Hjort}{Hjort}{2003}]{hjort2003topics}
Hjort, N.~L. (2003).
\newblock Topics in non-parametric bayesian statistics.
\newblock In P.~J. Green, N.~L. Hjort, and S.~Richardson (Eds.), {\em Highly
  structured stochastic systems}, pp.\  455--487. Oxford University Press.

\bibitem[\protect\citeauthoryear{Ho~Jang, Lee, and Lee}{Ho~Jang
  et~al.}{2010}]{jang2010}
Ho~Jang, G., J.~Lee, and S.~Lee (2010).
\newblock Posterior consistency of species sampling priors.
\newblock {\em Statistica Sinica\/}~{\em 20\/}(2), 581--593.

\bibitem[\protect\citeauthoryear{Hu and Zhang}{Hu and
  Zhang}{2022}]{huzhang2022functional}
Hu, Y. and J.~Zhang (2022).
\newblock Functional central limit theorems for stick-breaking priors.
\newblock {\em Bayesian Analysis\/}~{\em 17\/}(4), 1101--1120.

\bibitem[\protect\citeauthoryear{Isserlis}{Isserlis}{1918}]{Isserlis_1918}
Isserlis, L. (1918, nov).
\newblock On a formula for the product-moment ccoefficient of any order of a
  normal frequency distribution in any number of variables.
\newblock {\em Biometrika\/}~{\em 12\/}(1-2), 134--139.

\bibitem[\protect\citeauthoryear{James}{James}{2008}]{james2008}
James, L.~F. (2008).
\newblock Large sample asymptotics for the two-parameter
  {Poisson{\textendash}Dirichlet} process.
\newblock In {\em In Clarke, B. and Ghosal, S. (eds.), Pushing the Limits of
  Contemporary Statistics: Contributions in Honor of Jayanta K. Ghosh,},
  Volume~3, Cambridge, MA, pp.\  187--199. MIT Press.

\bibitem[\protect\citeauthoryear{James, Lijoi, and Pr{\"u}nster}{James
  et~al.}{2006}]{james2006conjugacy}
James, L.~F., A.~Lijoi, and I.~Pr{\"u}nster (2006).
\newblock Conjugacy as a distinctive feature of the {D}irichlet process.
\newblock {\em Scandinavian Journal of Statistics\/}~{\em 33\/}(1), 105--120.

\bibitem[\protect\citeauthoryear{James, Lijoi, and Pr{\"u}nster}{James
  et~al.}{2009}]{james2009}
James, L.~F., A.~Lijoi, and I.~Pr{\"u}nster (2009, jul).
\newblock Posterior analysis for normalized random measures with independent
  increments.
\newblock {\em Scandinavian Journal of Statistics\/}~{\em 36\/}(1), 76--97.

\bibitem[\protect\citeauthoryear{Kim and Bickel}{Kim and
  Bickel}{2003}]{kim2003}
Kim, N. and P.~J. Bickel (2003).
\newblock The limit distribution of a test statistic for bivariate normality.
\newblock {\em Statistica Sinica\/}~{\em 13}, 327--349.

\bibitem[\protect\citeauthoryear{Kim and Lee}{Kim and Lee}{2004}]{kim2004}
Kim, Y. and J.~Lee (2004, aug).
\newblock A {Bernstein{\textendash}von Mises} theorem in the nonparametric
  right-censoring model.
\newblock {\em The Annals of Statistics\/}~{\em 32\/}(4), 1492--1512.

\bibitem[\protect\citeauthoryear{Kingman}{Kingman}{1967}]{kingman1967completely}
Kingman, J. (1967).
\newblock Completely random measures.
\newblock {\em Pacific Journal of Mathematics\/}~{\em 21\/}(1), 59--78.

\bibitem[\protect\citeauthoryear{Kingman}{Kingman}{1975}]{kingman1975}
Kingman, J.~F. (1975).
\newblock Random discrete distributions.
\newblock {\em Journal of the Royal Statistical Society: Series B
  (Methodological)\/}~{\em 37\/}(1), 1--15.

\bibitem[\protect\citeauthoryear{Kingman}{Kingman}{1993}]{kingman1993poisson}
Kingman, J. F.~C. (1993).
\newblock {\em Poisson Processes}, Volume~3 of {\em Oxford Studies in
  Probability}.
\newblock Clarendon Press, Oxford University Press, New York.

\bibitem[\protect\citeauthoryear{Kuchibhotla and Rinaldo}{Kuchibhotla and
  Rinaldo}{2020}]{kuchibhotla2020high}
Kuchibhotla, A.~K. and A.~Rinaldo (2020).
\newblock High-dimensional clt for sums of non-degenerate random vectors:
  $n^{-1/2}$-rate.
\newblock {\em arXiv preprint arXiv:2009.13673\/}.

\bibitem[\protect\citeauthoryear{Labadi and Abdelrazeq}{Labadi and
  Abdelrazeq}{2016}]{labadi2016}
Labadi, L.~A. and I.~Abdelrazeq (2016, aug).
\newblock On functional central limit theorems of {Bayesian} nonparametric
  priors.
\newblock {\em Statistical Methods {\&} Applications\/}~{\em 26\/}(2),
  215--229.

\bibitem[\protect\citeauthoryear{Labadi and Zarepour}{Labadi and
  Zarepour}{2013}]{labadi2013}
Labadi, L.~A. and M.~Zarepour (2013, sep).
\newblock On asymptotic properties and almost sure approximation of the
  normalized inverse-{G}aussian process.
\newblock {\em Bayesian Analysis\/}~{\em 8\/}(3), 553--568.

\bibitem[\protect\citeauthoryear{Lijoi, Mena, and Pr{\"u}nster}{Lijoi
  et~al.}{2005a}]{lijoi2005a}
Lijoi, A., R.~H. Mena, and I.~Pr{\"u}nster (2005a, dec).
\newblock Bayesian nonparametric analysis for a generalized {D}irichlet process
  prior.
\newblock {\em Statistical Inference for Stochastic Processes\/}~{\em 8\/}(3),
  283--309.

\bibitem[\protect\citeauthoryear{Lijoi, Mena, and Pr{\"u}nster}{Lijoi
  et~al.}{2005b}]{lijoi2005b}
Lijoi, A., R.~H. Mena, and I.~Pr{\"u}nster (2005b, dec).
\newblock Hierarchical mixture modeling with normalized inverse-{G}aussian
  priors.
\newblock {\em Journal of the American Statistical Association\/}~{\em
  100\/}(472), 1278--1291.

\bibitem[\protect\citeauthoryear{Lijoi, Mena, and Pr{\"u}nster}{Lijoi
  et~al.}{2007}]{lijoi_2007}
Lijoi, A., R.~H. Mena, and I.~Pr{\"u}nster (2007, sep).
\newblock Controlling the reinforcement in {Bayesian} non-parametric mixture
  models.
\newblock {\em Journal of the Royal Statistical Society: Series B (Statistical
  Methodology)\/}~{\em 69\/}(4), 715--740.

\bibitem[\protect\citeauthoryear{Lijoi and Pr{\"u}nster}{Lijoi and
  Pr{\"u}nster}{2003}]{lijoi2003normalized}
Lijoi, A. and I.~Pr{\"u}nster (2003).
\newblock On a normalized random measure with independent increments relevant
  to {Bayesian} nonparametric inference.
\newblock In {\em 13th European Young Statisticians Meeting}, pp.\  123--134.
  Bernoulli Society.

\bibitem[\protect\citeauthoryear{Lijoi and Pr{\"u}nster}{Lijoi and
  Pr{\"u}nster}{2010}]{lijoi2010models}
Lijoi, A. and I.~Pr{\"u}nster (2010).
\newblock Models beyond the {D}irichlet process.
\newblock In N.~L. Hjort, C.~Holmes, P.~M{\"u}ller, and S.~G. Walker (Eds.),
  {\em Bayesian Nonparametrics}, Volume~28, Chapter~3, pp.\  80--163. Cambridge
  University Press.

\bibitem[\protect\citeauthoryear{Lijoi, Pr{\"u}nster, and Walker}{Lijoi
  et~al.}{2005}]{lijoi2005consistency}
Lijoi, A., I.~Pr{\"u}nster, and S.~G. Walker (2005).
\newblock On consistency of nonparametric normal mixtures for {Bayesian}
  density estimation.
\newblock {\em Journal of the American Statistical Association\/}~{\em
  100\/}(472), 1292--1296.

\bibitem[\protect\citeauthoryear{Lo}{Lo}{1983}]{lo1983}
Lo, A.~Y. (1983).
\newblock Weak convergence for {Dirichlet} processes.
\newblock {\em Sankhy{\=a}: The Indian Journal of Statistics, Series A\/},
  105--111.

\bibitem[\protect\citeauthoryear{Lo}{Lo}{1986}]{lo1986remark}
Lo, A.~Y. (1986).
\newblock A remark on the limiting posterior distribution of the multiparameter
  {D}irichlet process.
\newblock {\em Sankhy{\=a}: The Indian Journal of Statistics, Series A\/},
  247--249.

\bibitem[\protect\citeauthoryear{Patrick}{Patrick}{1995}]{billingsley1995}
Patrick, B. (1995).
\newblock {\em Probability and measure\/} (Third ed.).
\newblock New York: John Wiley \& Sons.

\bibitem[\protect\citeauthoryear{Perman, Pitman, and Yor}{Perman
  et~al.}{1992}]{perman1992}
Perman, M., J.~Pitman, and M.~Yor (1992, mar).
\newblock Size-biased sampling of {P}oisson point processes and excursions.
\newblock {\em Probability Theory and Related Fields\/}~{\em 92\/}(1), 21--39.

\bibitem[\protect\citeauthoryear{Pitman}{Pitman}{1996}]{pitman1996some}
Pitman, J. (1996).
\newblock Some developments of the {Blackwell-MacQueen} urn scheme.
\newblock In {\em Statistics, Probability and Game Theory, Lecture
  Notes-Monograph Series}, Volume~30, Hayward, CA, pp.\  245--267.

\bibitem[\protect\citeauthoryear{Pitman and Yor}{Pitman and
  Yor}{1997}]{pitman1997}
Pitman, J. and M.~Yor (1997, apr).
\newblock The two-parameter {Poisson-Dirichlet} distribution derived from a
  stable subordinator.
\newblock {\em The Annals of Probability\/}~{\em 25\/}(2), 855--900.

\bibitem[\protect\citeauthoryear{Ray and van~der Vaart}{Ray and van~der
  Vaart}{2021}]{ray2021bernstein}
Ray, K. and A.~van~der Vaart (2021).
\newblock On the {Bernstein-von Mises} theorem for the {D}irichlet process.
\newblock {\em Electronic Journal of Statistics\/}~{\em 15}, 2224--2246.

\bibitem[\protect\citeauthoryear{Regazzini, Lijoi, and Pr{\"u}nster}{Regazzini
  et~al.}{2003}]{Regazzini_2003}
Regazzini, E., A.~Lijoi, and I.~Pr{\"u}nster (2003, apr).
\newblock Distributional results for means of normalized random measures with
  independent increments.
\newblock {\em The Annals of Statistics\/}~{\em 31\/}(2), 560--585.

\bibitem[\protect\citeauthoryear{Sethuraman}{Sethuraman}{1994}]{sethuraman1994}
Sethuraman, J. (1994, may).
\newblock A constructive definition of {D}irichlet priors.
\newblock {\em Statistica sinica\/}, 639--650.

\bibitem[\protect\citeauthoryear{Sethuraman and Tiwari}{Sethuraman and
  Tiwari}{1982}]{sethuraman1982}
Sethuraman, J. and R.~C. Tiwari (1982).
\newblock Convergence of {Dirichlet} measures and the interpretation of their
  parameter.
\newblock In {\em Statistical decision theory and related topics III}, pp.\
  305--315. Elsevier.

\bibitem[\protect\citeauthoryear{Straf}{Straf}{1972}]{straf1972}
Straf, M.~L. (1972).
\newblock Weak convergence of stochastic processes with several parameters.
\newblock In {\em Proceedings of the Sixth Berkeley Symposium on Mathematical
  Statistics and Probability}, Volume~2, pp.\  187--221.

\bibitem[\protect\citeauthoryear{Vaart and Wellner}{Vaart and
  Wellner}{2023}]{Vaart2023}
Vaart, A. W. v.~d. and J.~A. Wellner (2023).
\newblock {\em Weak Convergence and Empirical Processes: With Applications to
  Statistics}, pp.\  127--384.
\newblock Cham: Springer International Publishing.

\bibitem[\protect\citeauthoryear{Zhang and Hu}{Zhang and
  Hu}{2021}]{huzhangreview}
Zhang, J. and Y.~Hu (2021).
\newblock Dirichlet process and {Bayesian} nonparametric models (in chinese).
\newblock {\em SCIENTIA SINICA Mathematica\/}~{\em 51\/}(11), 1895--1932.

\bibitem[\protect\citeauthoryear{Zhang and Hu}{Zhang and
  Hu}{2024}]{junxi2023large}
Zhang, J. and Y.~Hu (2024).
\newblock Large sample asymptotic analysis for normalized random measures with
  independent increments.
\newblock {\em Bayesian Analysis\/}~{\em 1\/}(1), 1--26.

\end{thebibliography}
\newpage
\section{Supplementary materials}\label{sec: supp}
In this supplementary material, we     recall some useful definitions
and include the proofs of the main results of  the paper.

\subsection{Introduction of $D(\mathbb{R} ^d)$}
To state the functional central limit 
theorem in Theorem \ref{thm: clt of NRMIs},  we  need the space $D(\RR^d)$ introduced in
 Section 3 of \citep{bickel1971}. The  characteristics  
  of    the elements (functions)  in $D(\RR^d)$ are given by  their continuity properties 
  described as follows. For   $1 \leq p\leq d$, let  $R_p$ be one of the relations $<$ or  $\geq$  and for $t 
  =(t_1, \cdots, t_d)\in \RR^d$  let $\mathcal{Q}_{R_1,\cdots,R_d}$ be the quadrant
\[
\mathcal{Q}_{R_1,\cdots,R_d}:=\left\{(s_1,\cdots,s_d)\in \RR^d :\, s_p R_p t_p,\, 1\leq p \leq d\right\}.
\]
Then, $x \in D(\RR^d)$ if and only if 
 (see e.g., \citep{straf1972}) for each $t\in \RR^d$,
the following two conditions hold:
 (i) $x_{\mathcal{Q}}=\lim_{s \rightarrow t,\, s \in \mathcal{Q}}x(s)$ exists for each of the $2^d$ quadrants $\mathcal{Q}=\mathcal{Q}_{R_1,\cdots, R_d}(t)$
(namely, for all the combinations that $R_1=``<"$,  or $``\ge" $, $\cdots$, $R_d=``<"$  or $ ``\ge"$), and (ii) $x(t)=x_{\mathcal{Q}_{\geq, \cdots, \geq}}$. In 
other words, $D(\RR^d)$ is the space of 
functions that are ``continuous from above with limits from below", which are similar   to  the space of 
the c\`{a}dl\`{a}g (French word abbreviation for 
``right continuous with left limits") functions
in one variable   (i.e., $d=1$). The metric   on $D(\RR^d)$ is introduced as follows. Let $\Lambda=\{\lambda:\RR^d\rightarrow \RR^d: \, \lambda(t_1,\cdots, t_d)=(\lambda_1(t_1),\cdots,\lambda_d(t_d))\}$, where each $\lambda_p: \RR \rightarrow \RR$ is continuous, strictly increasing and 
 has limits at both infinities.   Denote the  Skorohod  distance between $x,y \in D(\RR^d)$   by    
$$d(x,y)= \inf\{\min (\parallel x-y\lambda \parallel, \parallel \lambda \parallel):\, \lambda \in \Lambda\},$$
where $\displaystyle \parallel x-y\lambda \parallel= \sum_{n=1}^\infty 
\sup_{|t|\le n}    |x(t)-y(\lambda(t))| $ and $
\displaystyle \parallel \lambda \parallel=\sum_{n=1}^\infty 
\sup_{|t|\le n}   |\lambda(t)-t| $.  
 
\def\BB{\mathbb{B}}
Having introduced the metric  space 
$D(\RR^d)$  we can now explain  the concept of  weak convergence of a random measure
  on this space  with respect to its  
 Skorohod topology (the topology on $D(\RR^d)$ induced by the
Skorohod distance $d(x,y)$).
 Let $\mathbb{Q}_a: \Omega \times \mathcal{B}(\RR^d)\rightarrow [0, 1]$  be a family of  random probability measures 
  depending on a parameter $a>  0$ and  let $\BB: 
  \Omega \times \mathcal{B}(\RR^d)\rightarrow [0, 1]$  be another random probability measure. 
 Define 
 \[
 \mathbb{Q}_a(t_1, \cdots, t_d)=\mathbb{Q}_a((-\infty, t_1]\times \cdots \times
 (-\infty, t_d])\,,
 \quad (t_1, \cdots, t_d)\in \RR^d\,. 
 \]
\begin{definition}
We say $\mathbb{Q}_a$ converges to $\BB$ weakly on $D(\RR^d)$ with respect to the Skorohod topology,
denoted by   $\mathbb{Q}_a\stackrel{weakly}\rightarrow \BB$ in $D(\RR^d)$,  if for any bounded continuous 
(continuous 
with respect to Skorohod topology) functional  $f:D(\RR^d)\rightarrow \RR$
we have
\begin{equation}
\lim_{a\rightarrow \infty} \EE\!\! \left[f(\mathbb{Q}_a(\cdot, \cdots,\cdot))
\right]=\EE\!\! \left[f(\BB(\cdot, \cdots,\cdot))\right]\,. 
\end{equation}
  \end{definition}  
  
\subsection{Proof of Proposition \ref{prop.CRM}}

\begin{proof}

(i) For any $A\in\mathcal{X}$,  the $p$-th moment of $\mu_m(A)$ for any positive integer $p$ can be computed as follows. For any $\lambda>0$, we have
\begin{align}
\E\lk e^{-\lambda \mu_m(A)}\rk=\exp \ltd -H(A)  \int_0^{\infty}\lc 1-e^{-\lambda s} \rc \rho(ds)  \rtd\,.\label{eq: 5.1}
\end{align}
We shall denote $\phi(\lambda)=\int_0^{\infty}\lc 1-e^{-\lambda s} \rc \rho(ds)$.  Then, the $p$-th moment of $\mu_m(A)$ is
\begin{align}
\E\lk \lc\mu_m(A) \rc^p \rk&=(-1)^p \frac{\partial^p}{\partial \lambda^p} e^{-aH(A) {\phi}(\lambda)}\bigg|_{\lambda=0}\nonumber\\
&=(-1)^p \sum_{i=1}^p \lc -a H(A)\rc^i e^{-H(A) {\phi}(\lambda)} \sum_{\varrho : |\varrho|=i} \prod_{J \in \varrho} \frac{\partial^{|J|}}{\partial \lambda^{|J|}} {\phi}(\lambda)\bigg|_{\lambda=0}\,,\label{eq: 5.2}
\end{align}
where $\varrho$ represents partitions of $\{1,\cdots,p\}$ and the summation $\sum_{\varrho : |\varrho|=i} $ is taken of all partitions $\varrho$ such that $|\varrho|=i$. Equation \eqref{eq: 5.2} is a direct application of F{\` a\` a} di Bruno's formula, see e.g., \citep{hardy2006combinatorics}. By direct computation, we can rewrite \eqref{eq: 5.2} as follows.
\begin{align}
&\E\lk \lc\mu_m(A) \rc^p \rk\nonumber\\
&\qquad = (-1)^p \sum_{i=1}^p \lc - H(A)\rc^i e^{-H(A) {\phi}(\lambda)} \frac{1}{i!} \sum_{*}  {p \choose q_1, \cdots, q_i }\prod_{j=1}^i \frac{\partial^{q_j}}{\partial \lambda^{q_j}} {\phi}(\lambda)\bigg|_{\lambda=0}\,,\label{eq: 5.3}
\end{align}
where the summation $\sum_{*}$ is taken over all vectors $(q_1,\cdots, q_i)$ of positive integers such that $\sum_{j=1}^i q_j=p$. Note that 
\begin{align*}
&\frac{\partial^k}{\partial \lambda^k}{\phi}(\lambda)= (-1)^{k-1}\int_0^{\infty}s^k e^{-\lambda s} \rho(ds) \quad \text{for } k \geq 1 \,, \\
& \frac{\partial^k}{\partial \lambda^k}{\phi}(\lambda)\bigg|_{\lambda=0}=(-1)^{k-1} \tau_k\,.
\end{align*}
Thus,
\begin{align*}
\E\lk \lc\mu_m(A) \rc^p \rk&=(-1)^p \sum_{i=1}^p \lc - H(A)\rc^i \frac{1}{i!} \sum_{*}  {p \choose q_1, \cdots, q_i }\prod_{j=1}^i (-1)^{q_j-1} \tau_{q_j}\\
&=\sum_{i=1}^p  H(A)^i \frac{1}{i!} \sum_{*}  {p \choose q_1, \cdots, q_i }\prod_{j=1}^i  \tau_{q_j}\,.
\end{align*}

(ii) For any $A \in \mathcal{X}$, by the decomposition \eqref{e.2.8}, we have
 $\tilde{\mu}_a(A)=\sum_{m=1}^{\lfloor a \rfloor}\mu_m(A) + \mu_{\Delta}(A)$. It is sufficient to show $\mu_{\Delta}(A)=\int_{\lfloor a \rfloor}^a\int_0^{\infty}\int_A sN(d\zeta,ds,dx)$  is non-negative almost surely. This is true since $N(d\zeta,ds,dx)$ is a Poisson random measure with non-negative mean measure $d\zeta \rho(ds)H(dx)$. Thus, $\tilde{\mu}_a(A) \geq \tilde{\mu}_{\lfloor a \rfloor}(A)$ almost surely. Similarly,  $\tilde{\mu}_a(A) \leq \tilde{\mu}_{\lceil a \rceil}(A)$ almost surely. This proves result (ii).

(iii) Due to the fact that $\int_0^{\infty} \min (s,1)\rho(ds) <\infty$,  one has $\tau_k < \infty$ for any positive integer $k$ when $\tilde{\tau}_k<\infty$.

When $\tilde{\tau}_1<\infty$, we have $H(A)\tau_1 <\infty$.  Based on the decomposition $\tilde{\mu}_M=\sum_{m=1}^M \mu_m$, we have $\{\mu_m(A)\}_{m=1}^M$ is a sequence of iid random variables with finite mean $H(A)\tau_1$ for any given $A\in \mathcal{X}$. Result (iii) follows directly by applying the strong law of large numbers of a iid random sequence $\{\mu_m(A)\}_{m=1}^M$.

(iv) Also, by the decomposition $\tilde{\mu}_M=\sum_{m=1}^M \mu_m$, we can rewrite $R_M(A)$ as 
\begin{align}
R_M(A)=\frac{1}{\sqrt{M\tau_2H(A)(1-H(A))}} \sum_{m=1}^{M}\left( \mu_m(A)-H(A)\mu_m(\mathbb{X})\right)\,. 
\end{align}
Let $\{\mathbf{Y}_m\}_{m=1}^{M}$ be the sequence of $n$-dimensional random vector, where
\begin{align*}
\mathbf{Y}_m=\lc \frac{\mu_m(A_1)-H(A_1)\mu_m(\mathbb{X})}{\sqrt{\tau_2H(A_1)(1-H(A_1))}}, \cdots, \frac{\mu_m(A_n)-H(A_n)\mu_m(\mathbb{X})}{\sqrt{\tau_2H(A_n)(1-H(A_n))}} \rc^T
\end{align*}
Then, $\{\mathbf{Y}_m\}_{m=1}^{M}$ is an independent sequence with mean $0$ and covariance matrix $\Sigma=(\sigma_{ij})_{1\leq i,j \leq n}$ is given by 
\begin{align}
    \sigma_{ij}= \begin{cases}
    1 & \qquad \mbox{if $i=j$}\,,  \\
    -\sqrt{\frac{H(A_i)H(A_j)}{\left(1-H(A_i)\right)\left(1-H(A_j)\right)}}& \qquad \mbox{if $i \neq j$}\,.  \\
\end{cases}
\end{align}
The above covariance matrix is computed by using 
\begin{align*}
\E\lk \mu_k(A)^2 \rk=\frac{d^2}{d\lambda^2}\exp \ltd H(A) \int_0^{\infty} \lc 1-e^{-\lambda s} \rc \rho(ds)\rtd \bigg|_{\lambda=0}=H(A)^2\tau_1^2+H(A)\tau_2\,.
\end{align*}
By the relationship that $(R_M(A_1),\cdots, R_M(A_n))=\frac{\sum_{m=1}^M \mathbf{Y}_m}{\sqrt{M}}$, we can obtain
 result (iv) directly from the central limit theorem of independent $n$-dimensional random vectors $\{\mathbf{Y}_m\}_{m=1}^{M}$.
\end{proof}

\subsection{Proof of Proposition \ref{prop: order of var}}

\begin{proof}
The moment and variance of $P(A)$   are  given in Proposition 1 of \citep{james2006conjugacy}. The expression of $\mathcal{I}_a$ is given as follows.
\begin{align}
\mathcal{I}_a=a \int_0^{\infty} \lambda \exp\ltd -a\int_0^{\infty}\lc 1-e^{-\lambda s} \rc \rho(ds) \rtd \int_0^{\infty} s^2 e^{-\lambda s}\rho(ds) d\lambda\,.
\end{align}
Define a function $f(\lambda)$ as
\begin{align*}
f(\lambda):=-\int_0^{\infty}\lc 1-e^{-\lambda s} \rc \rho(ds)+\frac{\log \lambda}{a}+\frac{\log \lc\int_0^{\infty} s^2 e^{-\lambda s}\rho(ds) \rc}{a}\,.
\end{align*}
Then $\mathcal{I}_a=a \int_0^\infty e^{af(\lambda)}d\lambda$, by Laplace's method, the approximation of $\mathcal{I}_a$ is $a\sqrt{\frac{2\pi}{a|f''(\lambda_0)|}}e^{af(\lambda_0)}$ when $a \to \infty$.

First, $f'(\lambda)=-\int_0^{\infty} s e^{-\lambda s}\rho(ds)+\frac{1}{a\lambda}-\frac{\int_0^{\infty} s^3 e^{-\lambda s}\rho(ds)}{a\int_0^{\infty} s^2 e^{-\lambda s}\rho(ds)}=0$ implies $\lambda_0=\frac{1}{a\int_0^{\infty}s\rho(ds)}$ when $a$ is large. 

Then, $f''(\lambda)=\int_0^{\infty} s^2 e^{-\lambda s}\rho(ds)-\frac{1}{a\lambda^2}-\frac{\lc \int_0^{\infty} s^3 e^{-\lambda s}\rho(ds)\rc^2-\int_0^{\infty} s^4 e^{-\lambda s}\rho(ds)\int_0^{\infty} s^2 e^{-\lambda s}\rho(ds)}{a\lc\int_0^{\infty} s^2 e^{-\lambda s}\rho(ds) \rc^2}$. It is trivial to have $f''(\lambda_0)=-a+O\lc 1\rc$.

By the approximation that $e^{af(\lambda_0)}=O\lc \frac{1}{a}\rc$, we can conclude that $\mathcal{I}_a=O\lc \frac{1}{a}\rc$, which completes the proof.
\end{proof}

\subsection{Proof of Theorem \ref{thm: slln}}
\begin{proof}
Let $A \in \mathcal{X}$. By result (i) in Proposition \ref{prop.CRM}, one has
\begin{align}
\frac{\lfloor a \rfloor}{a}\frac{\tilde{\mu}_{\lfloor a \rfloor}(A)}{\lfloor a \rfloor}\leq \frac{\tilde{\mu}_a(A)}{a} \leq \frac{\lceil a \rceil}{a} \frac{\tilde{\mu}_{\lceil a \rceil}(A)}{\lceil a \rceil} \quad \text{a.s.}
\end{align}
When $a \rightarrow \infty$, we have $\lim_{a \to \infty}\frac{\lfloor a \rfloor}{a}=\lim_{a \to \infty}\frac{\lceil a \rceil}{a}=1$. Followed by result (ii) in Proposition \ref{prop.CRM}, we immediately obtain
\begin{align}
\lim_{a \to \infty} \frac{\lfloor a \rfloor}{a}\frac{\tilde{\mu}_{\lfloor a \rfloor}(A)}{\lfloor a \rfloor}=\lim_{a \to \infty} \frac{\lceil a \rceil}{a} \frac{\tilde{\mu}_{\lceil a \rceil}(A)}{\lceil a \rceil}=H(A)\tau_1 \quad \text{a.s.}
\end{align}
Therefore, $\lim_{a \to \infty}\frac{\tilde{\mu}_a(A)}{a}=H(A)\tau_1$ a.s.. Similarly, we have $\lim_{a \to \infty}\frac{\tilde{\mu}_a(\mathbb{X})}{a}=\tau_1$ a.s.. 

Thus, for any $A \in \mathcal{X}$, we have
\begin{align}
P(A)=\frac{\frac{\tilde{\mu}_a(A)}{a}}{\frac{\tilde{\mu}_a(\mathbb{X})}{a}} \overset{a.s.}{\rightarrow} \frac{H(A)\tau_1}{\tau_1}=H(A)\,. \label{eq:6.10}
\end{align}

To show $P \overset{a.s.}{\to} H$ as a random probability measure, we shall show $\lim_{a \to \infty} d(P,H)=0$ almost surely. Note that $f_i$ can be approximated by step functions $\sum_{k_i=1}^{M_i} \beta_{k_i=1}\mathbbm{1}_{A_{k_i}}$ with arbitrary error $\epsilon>0$ for any $f_i\in\{f_i\in C_b(\mathbb{X},\eta): i\geq 1\}$. Therefore, we have almost surely that
\begin{align*}
 \left|d(P,H)-\sum_{i=1}^{\infty} \frac{|\sum_{k_i=1}^{M_i} \beta_{k_i=1}\lc P(A_{k_i})-H(A_{k_i}) \rc| \wedge 1}{2^i} \right|<\epsilon\,.
\end{align*}
By \eqref{eq:6.10}, there exist a positive $a^*$ such that for all $a>a^*$, we almost surely have
\begin{align*}
d(P,H)&<\sum_{i=1}^{\infty} \frac{|\sum_{k_i=1}^{M_i} \beta_{k_i=1}\lc P(A_{k_i})-H(A_{k_i}) \rc| \wedge 1}{2^i} +\epsilon\\
&\leq \sum_{i=1}^{\infty} \frac{\sum_{k_i=1}^{M_i} |\beta_{k_i=1}| \left| P(A_{k_i})-H(A_{k_i}) \right| \wedge 1}{2^i} +\epsilon\\
&\leq \sum_{i=1}^{\infty} \frac{\epsilon}{2^i}+\epsilon=2\epsilon\,.
\end{align*}
Since $\epsilon>0$ is arbitrary, we have $\lim_{a \to \infty} d(P,H)=0$ almost surely. This proves result (i).

When $(\mathbb{X},\mathcal{X})=(\mathbb{R},\mathcal{B}(\mathbb{R}))$, the Glivenko-Cantelli theorem for $P$ follows directly by Theorem 20.6 in \citep{billingsley1995} as long as the strong law of large number result (i) holds.
\end{proof}
\subsection{Proof of Theorem \ref{thm: clt of NRMIs}}

\begin{proof}\label{proof: clt}
We shall prove (i) in Theorem \ref{thm: clt of NRMIs} first and prove (ii) afterwards based on the result  in (i).

We rewrite $D_a(\cdot)=\frac{R_a(\cdot)}{V_a}$, where 
\begin{align*}
&R_a(\cdot)=\frac{\tilde{\mu}_a(\cdot)-H(\cdot)\tilde{\mu}_a(\mathbb{X})}{\sqrt{a\tau_2H(\cdot)(1-H(\cdot))}}\,,\\
&V_a=\frac{\tilde{\mu}_a(\mathbb{X})}{a\tau_1}\,.
\end{align*}
For any disjoint measurable subsets $(A_1,\cdots,A_n)$, recall that $\bZ=(Z_1,\cdots,Z_n)$, we will prove the following two results when $a \rightarrow \infty$.  
\begin{align}
&\lc R_a(A_1),\cdots, R_a(A_n)\rc\overset{d}{\rightarrow} \bZ\,,\label{eq: 3.4}\\
&V_a \overset{p}{\rightarrow}1\,.\label{eq: 3.5}
\end{align}
Then, result (i) in Theorem \ref{thm: clt of NRMIs} follows directly by applying Slutsky’s theorem for \eqref{eq: 3.4} and \eqref{eq: 3.5}.

{\bf Step 1: Proof of \eqref{eq: 3.4}}

First, we can decompose $\tilde{\mu}_a(A)=\sum_{k=1}^{\lfloor a \rfloor}\mu_k(A)+\mu_{\Delta}(A)$ for any given $A \in \mathcal{X}$.

Therefore, we can rewrite $R_a$ for any  $A \in \mathcal{X}$ as
\begin{align}
R_a(A)=\frac{1}{\sqrt{a\tau_2H(A)(1-H(A))}} \sum_{k=1}^{\lfloor a \rfloor}\left( \mu_k(A)-H(A)\mu_k(\mathbb{X})\right) +\frac{\mu_{\Delta}(A)-H(A)\mu_{\Delta}(\mathbb{X})}{\sqrt{a\tau_2H(A)(1-H(A))}}
\end{align}
Let $\{\mathbf{Y}_k\}_{k=1}^{\lfloor a \rfloor}$ be the sequence of $n$-dimensional random vector, where
\begin{align*}
\mathbf{Y}_k=(\frac{\mu_k(A_1)-H(A_1)\mu_k(\mathbb{X})}{\sqrt{\tau_2H(A_1)(1-H(A_1))}}, \cdots, \frac{\mu_k(A_n)-H(A_n)\mu_k(\mathbb{X})}{\sqrt{\tau_2H(A_n)(1-H(A_n))}})^T
\end{align*}
Then, $\{\mathbf{Y}_k\}_{k=1}^{\lfloor a \rfloor}$ is an independent sequence with mean $0$ and covariance matrix $\Sigma$.

Similarly, we denote $\mathbf{Y}_{\Delta}=(\frac{\mu_{\Delta}(A_1)-H(A_1)\mu_{\Delta}(\mathbb{X})}{\sqrt{\tau_2H(A_1)(1-H(A_1))}}, \cdots, \frac{\mu_{\Delta}(A_n)-H(A_n)\mu_{\Delta}(\mathbb{X})}{\sqrt{\tau_2H(A_n)(1-H(A_n))}})^T$. It is trivial to show $\E \lk \mathbf{Y}_{\Delta} \rk=\mathbf{0}$, and thus,
$\frac{\mathbf{Y}_{\Delta}}{\sqrt{a}} \overset{p}{\to} \mathbf{0}$.

Thus,
\begin{align}
( R_a(A_1),\cdots, R_a(A_n))^T&=\frac{1}{\sqrt{a}}(\sum_{k=1}^{\lfloor a \rfloor} \mathbf{Y}_k+\mathbf{Y}_{\Delta})\nonumber\\
&=\sqrt{\frac{a}{\lfloor a \rfloor}} \sqrt{\lfloor a \rfloor}\left( \frac{\sum_{k=1}^{\lfloor a \rfloor} \mathbf{Y}_k}{\lfloor a \rfloor} \right)+\frac{\mathbf{Y}_{\Delta}}{\sqrt{a}}\,,
\end{align}
where $\sqrt{\lfloor a \rfloor}\left( \frac{\sum_{k=1}^{\lfloor a \rfloor} \mathbf{Y}_k}{\lfloor a \rfloor} \right) \overset{d}{\to} \bZ$ by result (iv) of Proposition \ref{prop.CRM} and 
$\frac{\mathbf{Y}_{\Delta}}{\sqrt{a}}\overset{p}{\to} \mathbf{0}$ as $a\to \infty$.

{\bf Step 2: Prove \eqref{eq: 3.5}.}

Followed by the Laplace transform in \eqref{eq: laplace}, we have the Laplace transform of $V_a$ as follows.
\begin{align}
\mathbb{E}\lk e^{-\lambda V_a} \rk&=\exp\ltd -a \int_0^{\infty} \lc 1-e^{-\frac{\lambda s}{a\tau_1}} \rc \rho(ds) \rtd\nonumber\\
&=\exp\ltd -a \int_0^{\infty} \lc \sum_{k=1}^{\infty}\frac{(-1)^{k-1}}{k!}\lc \frac{s\lambda}{a \tau_1}\rc^k \rc \rho(ds) \rtd\nonumber\\
&=\exp\ltd -\lambda + O\lc \frac{1}{a} \rc \rtd\,.\nonumber
\end{align}
With the assumption $\tilde{\tau}_2<\infty$, we have $\tau_1<\infty$. When $a \rightarrow \infty$, one has $V_a \overset{d}{\rightarrow} 1$ and thus $V_a \overset{p}{\rightarrow} 1$, which proves $\eqref{eq: 3.5}$.

As long as we have \eqref{eq: 3.4} and \eqref{eq: 3.5}, by Slutsky’s theorem we have 
\begin{align*}
(D_a(A_1),\cdots, D_a(A_n))=\frac{\bR_a}{V_a} \overset{d}{\rightarrow} \bZ\,,
\end{align*}
which implies result (i).

To prove the weakly convergence of $Q_{H,a}$ to $\mathbb{B}_H^o$ in $\mathbb{M}_{\mathbb{X}}$ with respect to the metric $d$, we shall prove for all bounded Lipschitz functions $g: \mathbb{M}_{\mathbb{X}} \rightarrow \mathbb{R}$,
\begin{align*}
d(\E[g(Q_{H,a})], \E[g(\mathbb{B}_H^o)])=\sum_{i=1}^{\infty} \frac{\left| \E\lk g\lc \int f_i dQ_{H,a} \rc \rk -\E\lk g\lc \int f_i d\mathbb{B}_H^o \rc \rk \right| \wedge 1}{2^i} \to 0
\end{align*}
when $a \to \infty$ and $f_i\in \{f_i\in C_b(\mathbb{X}, \eta): i \geq 1\}$.

From result (i), we have 
\begin{align*}
(Q_{H,a}(A_1), \cdots, Q_{H,a}(A_n)) \stackrel{d}{\rightarrow} (\mathbb{B}_H^o(A_1), \cdots, \mathbb{B}_H^o(A_n))\,.
\end{align*}
for any disjoint measurable subsets $A_1,\cdots,A_n$. Therefore,
\begin{align}
\left|\E \lk g \lc \int \sum_{j=1}^n \beta_j \mathbbm{1}_{A_j} dQ_{H,a} \rc \rk - \E \lk g\lc \int \sum_{j=1}^n \beta_j \mathbbm{1}_{A_j} d\mathbb{B}_H^o \rc \rk \right| \rightarrow 0 \,,\label{eq:6.15}
\end{align}
for all bounded Lipschitz functions $g$ when $a \rightarrow \infty$.

For any $f_i \in \{f_i \in C_b(\mathbb{X}, \eta): i\geq 1\}$, $f_i$ can be approximated by step functions $\sum_{k_i=1}^{M_i} \beta_{k_i=1}\mathbbm{1}_{A_{k_i}}$ with arbitrary error $\epsilon>0$. Therefore, 
\begin{align}
&\left| \E\lk g\lc \int f_i dQ_{H,a} \rc \rk -\E \lk g\lc \int \sum_{k_i=1}^{M_i} \beta_{k_i=1}\mathbbm{1}_{A_{k_i}}dQ_{H,a} \rc \rk \right|\nonumber\\
&\leq \E\lk \left| g\lc \int f_i dQ_{H,a} \rc - g\lc \int \sum_{k_i=1}^{M_i} \beta_{k_i=1}\mathbbm{1}_{A_{k_i}} dQ_{H,a} \rc \right| \rk\nonumber\\
&\leq K \E\lk \left|  \int f_i dQ_{H,a}  -  \int \sum_{k_i=1}^{M_i} \beta_{k_i=1}\mathbbm{1}_{A_{k_i}} dQ_{H,a} \right| \rk<\epsilon\,,\label{eq:6.16}
\end{align}
where the last inequality is due to $g$ is $K$-Lipschitz with some positive constant $K$.
Similarly, 
\begin{align}
\left| \E\lk g\lc \int f_i d\mathbb{B}_H^o \rc \rk -\E \lk g\lc \int \sum_{k_i=1}^{M_i} \beta_{k_i=1}\mathbbm{1}_{A_{k_i}}d\mathbb{B}_H^o  \rc \rk \right|<\epsilon\,.\label{eq:6.17}
\end{align}
By using \eqref{eq:6.15}, \eqref{eq:6.16} and \eqref{eq:6.17}, for all bounded Lipschitz functions $g$, when $a \rightarrow \infty$ we have
\begin{align*}
&\left| \E\lk g\lc \int f_i dQ_{H,a} \rc \rk -\E\lk g\lc \int f_i d\mathbb{B}_H^o \rc \rk \right|\\
&\leq \left| \E\lk g\lc \int f_i dQ_{H,a} \rc \rk -\E \lk g\lc \int \sum_{k_i=1}^{M_i} \beta_{k_i=1}\mathbbm{1}_{A_{k_i}}dQ_{H,a}  \rc \rk \right|+\\
&\quad\quad\left|\E \lk g \lc \int \sum_{k_i=1}^{M_i} \beta_{k_i=1}\mathbbm{1}_{A_{k_i}} dQ_{H,a} \rc \rk - \E \lk g\lc \int \sum_{k_i=1}^{M_i} \beta_{k_i=1}\mathbbm{1}_{A_{k_i}} d\mathbb{B}_H^o \rc \rk \right|+\\
&\quad\quad\left| \E\lk g\lc \int f_i d\mathbb{B}_H^o \rc \rk -\E \lk g\lc \int \sum_{k_i=1}^{M_i} \beta_{k_i=1}\mathbbm{1}_{A_{k_i}}d\mathbb{B}_H^o  \rc \rk \right|\\
&<3\epsilon\,.
\end{align*}
Therefore, when $a \rightarrow \infty$,
\begin{align*}
d(\E[g(Q_{H,a})], \E[g(\mathbb{B}_H^o)])=\sum_{i=1}^{\infty} \frac{\left| \E\lk g\lc \int f_i dQ_{H,a} \rc \rk -\E\lk g\lc \int f_i d\mathbb{B}_H^o \rc \rk \right| \wedge 1}{2^i}<\epsilon\,,
\end{align*}
for any $\epsilon>0$. This implies $Q_{H,a}$ converges weakly to $\mathbb{B}_H^o$ with respect to the metric $d$ in $\mathbb{M}_{\mathbb{X}}$.

When $\mathbb{X}=\mathbb{R}^d$, the second part of result (ii) of Theorem \ref{thm: clt of NRMIs} can be proved
  by obtaining the weak convergence of finite dimensional distributions 
  and by verifying  a tightness condition. 
  
  The finite dimensional weak convergence of $Q_{H,a}$ can be shown directly by part (i), i.e., for any $n \in \mathbb{Z}^+$ and any finite measurable sets $A_1, \cdots, A_n$ in $\mathcal{X}^d$, we have
\begin{align*}
(Q_{H,a}(A_1), \cdots, Q_{H,a}(A_n)) \stackrel{d}{\rightarrow} (\mathbb{B}_H^o(A_1), \cdots, \mathbb{B}_H^o(A_n))\,.
\end{align*}

For the tightness, by Theorem 2 of \citet{bickel1971},  it is sufficient to check the tightness condition, i.e., inequality (3) of \citet{bickel1971},  with  $\gamma_1=\gamma_2=2$, $\beta_1=\beta_2=1$ and $\xi=2H$.
Obviously,  $\xi$ is finite and nonatomic. For every pair of   Borel sets 
 $A$ and $B$ in $\mathcal{B}(\RR^d)$,  
  by Isserlis' theorem \citep{Isserlis_1918}, we   have  
\begin{align*}
&\mathbb{E}[|Q_{H,a}(A)|^2|Q_{H,a}(B)|^2]\\
&\qquad\qquad =[H(A)(1-H(A))][H(B)(1-H(B))]\mathbb{E}[D_a^2(A)D_a^2(B)]\\
&\qquad\qquad =[H(A)(1-H(A))][H(B)(1-H(B))]\left(1+2\frac{H(A)H(B)}{(1-H(A))(1-H(B))}+o(1)\right)\\
&\qquad\qquad  =3H(A)^2H(B)^2-H(A)^2H(B)-H(A)H(B)^2+H(A)H(B)+o(1)\\
&\qquad\qquad  \leq \xi(A)\xi(B).\ 
\end{align*}
The last inequality is due to the fact that $H(\cdot)\in (0,1)$ and thus $H(\cdot)^2 \leq H(\cdot)$.  Therefore, the tightness condition    on $D(\RR ^d)$
is verified.

\end{proof}

\subsection{Proof of Theorem \ref{thm: berry-esseen bound}}

We shall use the results in the following lemma in the proof of Theorem \ref{thm: berry-esseen bound}, we assume that $\tilde{\tau}_3<\infty$ in the following proofs.

\begin{lemma}\label{lemma: rate of V_a}
Recall that $V_a=\frac{\tilde{\mu}_a(\mathbb{X})}{a\tau_1}$. Assume that $\tilde{\tau}_3<\infty$. Let $\epsilon_a=\frac{1}{\sqrt{a}}$, $\delta_a=\frac{\tau_3}{\sqrt{a}\tau_1^3}$, then
\begin{align}
\mathbb{P}\lc |V_a-1|< \epsilon_a \rc \leq \delta_a\,.\label{eq: 5.11}
\end{align}
\end{lemma}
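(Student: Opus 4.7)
The printed inequality $\mathbb{P}(|V_a-1|<\epsilon_a)\le \delta_a$, with both $\epsilon_a=a^{-1/2}$ and $\delta_a=\tau_3/(\sqrt a\,\tau_1^3)$ of order $a^{-1/2}$, is an anti-concentration (small-ball) assertion about $V_a=\tilde\mu_a(\mathbb{X})/(a\tau_1)$. Before choosing a strategy I check whether it can hold. Extending the Laplace-transform computation already carried out in the proof of Theorem~\ref{thm: clt of NRMIs} one order further gives
\[
\log\mathbb{E}[e^{-\lambda V_a}]=-\lambda+\frac{\lambda^2\tau_2}{2a\tau_1^2}-\frac{\lambda^3\tau_3}{6a^2\tau_1^3}+O(a^{-3}),
\]
so $\mathrm{Var}(V_a)\sim \tau_2/(a\tau_1^2)$ and, by the CLT argument of the paper, $\sqrt a(V_a-1)\Rightarrow \mathcal N(0,\tau_2/\tau_1^2)$. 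Consequently $\mathbb{P}(|V_a-1|<a^{-1/2})\to \Phi(\tau_1/\sqrt{\tau_2})-\Phi(-\tau_1/\sqrt{\tau_2})>0$, a strictly positive constant. This is incompatible with the vanishing $\delta_a$, so the printed anti-concentration bound cannot hold for large $a$. I conclude that the lemma contains a typographical slip in the direction of the inequality; the statement that is both provable and actually invoked in the proof of Theorem~\ref{thm: berry-esseen bound} (to pass a Berry--Esseen rate through the denominator of $D_a=R_a/V_a$) is the concentration inequality $\mathbb{P}(|V_a-1|\ge \epsilon_a)\le \delta_a$.

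For the intended concentration bound the plan is to exploit the i.i.d.\ representation from Section~\ref{subsec.decomposition}. Writing
\[
V_a-1=\frac{1}{a\tau_1}\Bigl(\sum_{m=1}^{\lfloor a\rfloor}\bigl(\mu_m(\mathbb{X})-\tau_1\bigr)+\mu_\Delta(\mathbb{X})-\Delta\tau_1\Bigr),
\]
the variables $\mu_m(\mathbb{X})-\tau_1$ are i.i.d.\ centered with variance $\tau_2$ and finite third central moment (Proposition~\ref{prop.CRM}(i) with $p=3$, using $\tilde\tau_3<\infty$). Two tools are then natural: the classical Berry--Esseen theorem on the centered sum, which yields a uniform $O\bigl(a^{-1/2}\mathbb{E}|\mu_1(\mathbb{X})-\tau_1|^3/\tau_2^{3/2}\bigr)$ Kolmogorov distance between the law of $\sqrt a(V_a-1)$ and $\mathcal N(0,\tau_2/\tau_1^2)$; or Markov's inequality applied to the absolute third moment $\mathbb{E}|V_a-1|^3=O(a^{-3/2})$ extracted from the Laplace expansion. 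Either tool is combined with $\lVert \mu_\Delta(\mathbb{X})-\Delta\tau_1\rVert_{L^3}=O(1)$, whose rescaled contribution $O(1/a)$ is absorbed into the main error.

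The main obstacle is matching the specific rate printed in the lemma. With the stated $\epsilon_a=a^{-1/2}$, neither route produces $\delta_a=O(a^{-1/2})$: the Berry--Esseen tail $\mathbb{P}\bigl(|\mathcal N(0,\tau_2/\tau_1^2)|\ge 1\bigr)$ is a strictly positive constant independent of $a$, and Markov with the third absolute moment gives $\mathbb{E}|V_a-1|^3/\epsilon_a^3=\Theta(1)$. The window $\epsilon_a$ must therefore be widened in order for $\delta_a$ to be attainable; the natural calibration is $\epsilon_a=a^{-1/3}$, for which Markov on the third absolute moment delivers a bound of order $a^{-1/2}$ with constant expressible as a multiple of $\tau_3/\tau_1^3$ once $\mathbb{E}|\mu_1(\mathbb{X})-\tau_1|^3$ is dominated by a multiple of $\tau_3$ via $\tilde\tau_3<\infty$. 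I therefore expect the correct form of the lemma to read $\mathbb{P}(|V_a-1|\ge a^{-1/3})\le C\tau_3/(\sqrt a\,\tau_1^3)$, with proof consisting of the Markov-with-third-moment computation just described; this is the version that suffices for every downstream use of the lemma in Theorem~\ref{thm: berry-esseen bound}.
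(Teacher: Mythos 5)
Your diagnosis is correct, and it matches what the paper actually does. The paper's own proof computes, from the Laplace transform of $V_a$, the moments $\E[V_a]=1$, $\E[V_a^2]=1+\frac{\tau_2}{a\tau_1^2}$, $\E[V_a^3]=1+\frac{3\tau_2}{a\tau_1^2}+\frac{\tau_3}{a^2\tau_1^3}$, deduces the third \emph{central} moment $\E\lk (V_a-1)^3\rk=\frac{\tau_3}{a^2\tau_1^3}$, and then concludes with
\begin{align*}
\mathbb{P}\lc |V_a-1|>\epsilon_a\rc \leq \frac{\E\lk (V_a-1)^3\rk}{\epsilon_a^3}=a^{3/2}\frac{\tau_3}{a^2\tau_1^3}=\delta_a\,,
\end{align*}
so the ``$<$'' in the statement is indeed a typo for the concentration direction ``$>$''. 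More importantly, this last step is an invalid application of Markov's inequality: $(V_a-1)^3$ is not a nonnegative random variable, and the signed third central moment (a cumulant, of order $a^{-2}$) cannot stand in for $\E|V_a-1|^3$. The absolute moment is genuinely larger: by Lyapunov's inequality $\E|V_a-1|^3\geq \lc \Var(V_a)\rc^{3/2}=\lc\frac{\tau_2}{a\tau_1^2}\rc^{3/2}$, so with $\epsilon_a=a^{-1/2}$ the correct Markov bound is $\Theta(1)$, exactly as you say. Your CLT argument then shows the corrected statement is not merely unproven but false: $\sqrt{a}(V_a-1)\Rightarrow N(0,\tau_2/\tau_1^2)$ forces $\mathbb{P}\lc|V_a-1|\geq a^{-1/2}\rc\to 2\lc 1-\Phi(\tau_1/\sqrt{\tau_2})\rc>0$, incompatible with $\delta_a\to 0$. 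So you have correctly identified an error in the paper rather than missed its argument.

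Two caveats on your repair. First, the constant in $\mathbb{P}\lc|V_a-1|\geq a^{-1/3}\rc\leq C a^{-1/2}$ is not a multiple of $\tau_3/\tau_1^3$ alone: bounding $\E|V_a-1|^3$ under $\tilde{\tau}_3<\infty$ requires Rosenthal's inequality (the Laplace transform only yields signed moments), and the dominant term there is $a^{3/2}\tau_2^{3/2}$, so $C$ is of the form $C'\lc\tau_2^{3/2}+\tau_3+\dots\rc/\tau_1^3$. Second, and more consequentially, your repaired lemma makes the downstream proofs of Theorem \ref{thm: berry-esseen bound} logically valid but does not preserve their conclusion: in those proofs the window also enters through $F_{\bZ}\lc(1+\epsilon_a)\bt\rc=F_{\bZ}(\bt)+O(\epsilon_a)$, so taking $\epsilon_a=a^{-1/3}$ degrades the Berry--Esseen rate to $O(a^{-1/3})$ (optimizing the window against the Markov bound gives at best $O(a^{-3/8})$), not the advertised $O(a^{-1/2})$. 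Recovering $O(a^{-1/2})$ requires treating the random denominator differently, e.g.\ writing $\ltd R_a(A_i)/V_a\leq t_i\rtd=\ltd R_a(A_i)-t_i(V_a-1)\leq t_i\rtd$ and applying a Berry--Esseen bound directly to the i.i.d.\ sum $R_a(A_i)-t_i(V_a-1)$ for each fixed $\bt$, with a separate crude estimate for large $|\bt|$; under only $\tilde{\tau}_3<\infty$ no Markov-type bound on $V_a$ alone will do.
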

\begin{proof}
Based on the Laplace transform of $\tilde{\mu}$ in \eqref{eq: laplace}, for any $\lambda >0$, we can obtain the Laplace transform of $V_a$ as follows.
\begin{align}
\E\lk e^{-\lambda V_a} \rk= \exp\ltd -a\int_0^{\infty} \lc 1- e^{-\frac{\lambda s}{a \tau_1}} \rc \rho(ds) \rtd\,.
\end{align}
Thus, we can compute the moments of $V_a$ by $\E\lk V_a^m \rk = \lc -1\rc^m \frac{\partial^m}{\partial \lambda^m}\big|_{\lambda=0}$. We shall compute the first moment, the second moment, the third moment of $V_a$ as follows.

\begin{align*}
\E\lk V_a\rk&= -\frac{\partial}{\partial \lambda} \E\lk e^{-\lambda V_a} \rk \big|_{\lambda=0}\\
&= \exp\ltd -a\int_0^{\infty} \lc 1- e^{-\frac{\lambda s}{a \tau_1}} \rc \rho(ds) \rtd a \int_0^{\infty} e^{-\frac{\lambda s}{a \tau_1}} \frac{s}{a\tau_1} \rho(ds) \bigg|_{\lambda=0}\\
&=1\,.
\end{align*}
Similarly, we have
\begin{align*}
\E\lk V_a^2\rk&= \frac{\partial^2}{\partial \lambda^2} \E\lk e^{-\lambda V_a} \rk \big|_{\lambda=0}\\
&= \exp\ltd -a\int_0^{\infty} \lc 1- e^{-\frac{\lambda s}{a \tau_1}} \rc \rho(ds) \rtd \times\\
&\qquad \lk \lc a \int_0^{\infty} e^{-\frac{\lambda s}{a \tau_1}} \frac{s}{a\tau_1} \rho(ds) \rc^2+ \frac{1}{a\tau_1^2} \int_0^{\infty} e^{-\frac{\lambda s}{a \tau_1}} s^2 \rho(ds) \rk \bigg|_{\lambda=0}\\
&=1+\frac{\tau_2}{a\tau_1^2}\,.
\end{align*}
\begin{align*}
\E\lk V_a^3\rk&= -\frac{\partial^3}{\partial \lambda^3} \E\lk e^{-\lambda V_a} \rk \big|_{\lambda=0}\\
&= \exp\ltd -a\int_0^{\infty} \lc 1- e^{-\frac{\lambda s}{a \tau_1}} \rc \rho(ds) \rtd \bigg[
 \lc  \int_0^{\infty}\frac{1}{\tau_1} e^{-\frac{\lambda s}{a \tau_1}} s \rho(ds) \rc^3+\\
 &3 \lc  \frac{1}{a\tau_1^2} \int_0^{\infty} e^{-\frac{\lambda s}{a \tau_1}} s^2 \rho(ds) \rc \lc \int_0^{\infty}\frac{1}{\tau_1} e^{-\frac{\lambda s}{a \tau_1}} s \rho(ds)\rc + \frac{1}{a^2\tau_1^3} \int_0^{\infty} e^{-\frac{\lambda s}{a \tau_1}} s^3 \rho(ds) \bigg] \bigg|_{\lambda=0}\\
&=1+\frac{3\tau_2}{a\tau_1^2}+\frac{\tau_3}{a^2\tau_1^3}\,.
\end{align*}

It follows that $\E\lk \lc V_a-1\rc^3 \rk=\frac{\tau_3}{a^2\tau_1^3}$. By the Markov's inequality, we have 
\begin{align}
\mathbb{P}\lc |V_a-1| > \epsilon_a \rc \leq \frac{\E\lk( V_a-1)^3 \rk}{\epsilon_a^3}=a^{\frac{3}{2}}\frac{\tau_3}{a^2\tau_1^3}=\delta_a\,. \label{eq: 5.13}
\end{align}
\end{proof}

\begin{lemma}\label{lemma: difference of characters}
Recall that $R_a(\cdot)=\frac{\tilde{\mu}_a(\cdot)-H(\cdot)\tilde{\mu}_a(\mathbb{X})}{\sqrt{a\tau_2H(\cdot)(1-H(\cdot))}}$ and $\bR_a=(R_a(A_1), \cdots, R_a(A_n))$ for any pairwise disjoint measurable subsets $A_1,\cdots, A_n$. Let $\bZ$ be the n-dimensional normal random vector defined in Theorem \ref{thm: clt of NRMIs}. Assume that $\tilde{\tau}_3< \infty$. Then, for any $\bt>0$, when $a$ is large, we have
\begin{align}
\sup_{\bt \in \mathbb{R}^n}|F_{\bR_a}(\bt)-F_{\bZ}(\bt)| \leq \frac{C(n,H, \tau_3, \tau_2)}{ \sqrt{a}}\,,\label{eq: 5.14}
\end{align}
where $C(n,H, \tau_3, \tau_2)$ is a constant only depends on  $n$, $H$, $\tau_3$, $\tau_2$.
\end{lemma}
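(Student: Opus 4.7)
The plan is to represent $\bR_a$ as a normalized sum of $\lfloor a\rfloor+1$ \emph{independent} mean-zero $\RR^n$-valued random vectors whose total covariance is exactly $\Sigma=\mathrm{Cov}(\bZ)$, and then invoke a Lyapunov-type multivariate Berry--Esseen inequality for sums of independent (not necessarily identically distributed) vectors that is valid on all convex sets (e.g.\ the $\RR^d$ bound of Bentkus). Specializing the convex set to an orthant $\{\mathbf{s}\in\RR^n:s_i\le t_i\}$ then yields the uniform control on distribution functions required by \eqref{eq: 5.14}.

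Concretely, reusing the independent, mean-zero vectors $\mathbf{Y}_1,\dots,\mathbf{Y}_{\lfloor a\rfloor},\mathbf{Y}_\Delta$ introduced in the proof of Theorem \ref{thm: clt of NRMIs}, I would write
\begin{equation*}
\bR_a = \frac{1}{\sqrt{a}}\lc\sum_{k=1}^{\lfloor a\rfloor}\mathbf{Y}_k + \mathbf{Y}_\Delta\rc.
\end{equation*}
Independence of the summands comes from the fact that $N$ is a Poisson random measure on disjoint slabs of $[0,a]\times \mathbb{S}$; the Laplace-transform computations of Section \ref{subsec.decomposition} give $\mathrm{Cov}(\mathbf{Y}_k)=\Sigma$ and $\mathrm{Cov}(\mathbf{Y}_\Delta)=\Delta\,\Sigma$ with $\Delta\in[0,1)$, so that $\mathrm{Cov}(\bR_a)=a^{-1}(\lfloor a\rfloor+\Delta)\,\Sigma=\Sigma$. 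Because $\tilde{\tau}_3<\infty$ combined with $\int_0^\infty\min(s,1)\rho(ds)<\infty$ forces $\tau_1,\tau_2,\tau_3<\infty$, Proposition \ref{prop.CRM}(i) (applied also to $\mu_\Delta$, using $\Delta\le 1$) together with the elementary bound $|u+v|^3\le 4(|u|^3+|v|^3)$ furnishes a uniform third-moment estimate $\mathbb{E}\|\mathbf{Y}_k\|^3,\ \mathbb{E}\|\mathbf{Y}_\Delta\|^3\le\kappa$ for some $\kappa=\kappa(n,H,\tau_2,\tau_3)$. Plugging into the Bentkus-type multivariate Berry--Esseen inequality yields, for a constant $c_n$ depending only on $n$,
\begin{equation*}
\sup_{\bt\in\RR^n}\lvert F_{\bR_a}(\bt)-F_{\bZ}(\bt)\rvert\le c_n\sum_{k=1}^{\lfloor a\rfloor+1}\mathbb{E}\bigl\|a^{-1/2}\Sigma^{-1/2}\mathbf{Y}_k\bigr\|^3 \le c_n(\lfloor a\rfloor+1)\,\lambda_*^{-3/2}\kappa\,a^{-3/2}\le\frac{C(n,H,\tau_2,\tau_3)}{\sqrt{a}},
\end{equation*}
where I have set $\mathbf{Y}_{\lfloor a\rfloor+1}:=\mathbf{Y}_\Delta$ for brevity and $\lambda_*:=\lambda_{\min}(\Sigma)>0$ under the hypothesis $H(A_i)\in(0,1)$.

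The main technical point is the quantitative positivity $\lambda_{\min}(\Sigma)\ge\lambda_*>0$: since $\Sigma$ degenerates as any $H(A_i)\to 0$ or $1$, the estimate on $\lambda_*$ (obtained e.g.\ by writing $\Sigma=I+\mathrm{diag}(b_i^2)-bb^\top$ with $b_i=\sqrt{H(A_i)/(1-H(A_i))}$ and using $\sum H(A_i)\le 1$) forces $C$ to depend on the vector $(H(A_1),\dots,H(A_n))$, exactly as the statement permits. Apart from this eigenvalue bound and routine third-moment bookkeeping, the proof reduces to a direct application of the multivariate Berry--Esseen inequality on orthants, since both the independent decomposition and the moment estimates are already in place from Section \ref{subsec.decomposition} and Proposition \ref{prop.CRM}(i).
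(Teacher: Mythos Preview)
Your proposal is correct and follows essentially the same approach as the paper: decompose $\bR_a$ as $a^{-1/2}\bigl(\sum_{k=1}^{\lfloor a\rfloor}\mathbf{Y}_k+\mathbf{Y}_\Delta\bigr)$ with independent mean-zero summands having finite third moments (from $\tilde\tau_3<\infty$), and then invoke a multivariate Berry--Esseen inequality for independent vectors. The only difference is cosmetic---the paper cites the high-dimensional bounds of \citet{kuchibhotla2020high} rather than Bentkus, and is terser about the third-moment and smallest-eigenvalue bookkeeping that you spell out explicitly.
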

\begin{proof}

Recall Let $\{\mathbf{Y}_k\}_{k=1}^{\lfloor a \rfloor}$ be the sequence of $n$-dimensional random vector, where
\begin{align*}
\mathbf{Y}_k=(\frac{\mu_k(A_1)-H(A_1)\mu_k(\mathbb{X})}{\sqrt{\tau_2H(A_1)(1-H(A_1))}}, \cdots, \frac{\mu_k(A_n)-H(A_n)\mu_k(\mathbb{X})}{\sqrt{\tau_2H(A_n)(1-H(A_n))}})^T
\end{align*}
Then, $\{\mathbf{Y}_k\}_{k=1}^{\lfloor a \rfloor}$ is an independent sequence with mean $0$ and covariance matrix $\Sigma$.

Denote 
\[
\mathbf{Y}_{\Delta}=(\frac{\mu_{\Delta}(A_1)-H(A_1)\mu_{\Delta}(\mathbb{X})}{\sqrt{\tau_2H(A_1)(1-H(A_1))}}, \cdots, \frac{\mu_{\Delta}(A_n)-H(A_n)\mu_{\Delta}(\mathbb{X})}{\sqrt{\tau_2H(A_n)(1-H(A_n))}})^T\,.
\]  
It is trivial by Proposition \ref{prop.CRM} that  $(\{\mathbf{Y}_k\}_{k=1}^{\lfloor a \rfloor},  \mathbf{Y}_{\Delta})$   is a sequence of independent random vector with finite third moment when $\tilde{\tau}_3<\infty$. By our construction, $\bR_a=\frac{\sum_{k=1}^{\lfloor a \rfloor} \mathbf{Y}_k+\mathbf{Y}_{\Delta}}{\sqrt{\lceil a \rceil}} \times \frac{\sqrt{\lceil a \rceil}}{\sqrt{a}}$. Therefore, we can directly use Theorem 2 or Theorem 1 in \citet{kuchibhotla2020high} to obtain the result. In particular, the $\sigma_{\min}^2$ in \citet{kuchibhotla2020high} equal to $1$ in our case, and the and $\underline{\sigma}^2$ in \citet{kuchibhotla2020high} only depends on $(H(A_1), \cdots, H(A_n))$. 

\end{proof}

\begin{lemma}\label{lemma: BE of T_a}
Define random measure $T_a(\cdot):=\frac{\tilde{\mu}_a(\cdot)-H(\cdot)\tilde{\mu}_a(\mathbb{X})}{\sqrt{a\tau_2}}$ on $(\mathbb{X},\mathcal{X})$. Assume that $\tilde{\tau}_3< \infty$. Then, for any $t>0$ and any $f \in C_b(\mathbb{X},\eta)$, when $a$ is large, we have
\begin{align}
\sup_{t \in \mathbb{R}}|F_{\langle T_a, f \rangle}(t)-F_{\langle \mathbb{B}_H^o, f \rangle}(\bt)| \leq \frac{C(H, \tau_3, \tau_2)}{ \sqrt{a}}\,,\label{eq: 6.22}
\end{align}
where $C(H, \tau_3, \tau_2)$ is some constant only depends on  $H$, $\tau_2$, $\tau_3$.
\end{lemma}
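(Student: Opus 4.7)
\textbf{Proof plan for Lemma \ref{lemma: BE of T_a}.} The idea is to realize $\langle T_a, f\rangle$ as a normalized sum of independent centered random variables with finite third absolute moments, and then apply the Esseen (non-identically distributed) version of the Berry--Esseen inequality. Using the decomposition $\tilde{\mu}_a = \sum_{m=1}^{\lfloor a\rfloor} \mu_m + \mu_\Delta$ from Section \ref{subsec.decomposition}, set $\bar f := \int f\, dH$ and
\[
X_m := \int f\, d\mu_m - \mu_m(\mathbb{X})\,\bar f = \int (f-\bar f)\, d\mu_m, \qquad X_\Delta := \int (f-\bar f)\, d\mu_\Delta.
\]
Then $\{X_m\}_{m=1}^{\lfloor a\rfloor}$ is an i.i.d.\ sequence independent of $X_\Delta$, and
\[
\langle T_a, f\rangle = \frac{1}{\sqrt{a\tau_2}}\Bigl(\sum_{m=1}^{\lfloor a\rfloor} X_m + X_\Delta\Bigr).
\]

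The next step is the moment bookkeeping. Using the Laplace characterization \eqref{eq: laplace} of the CRM $\mu_m$ (with L\'evy intensity $\rho(ds) H(dx)$) one checks that for any bounded measurable $g$,
\[
\E[\langle g,\mu_m\rangle] = \tau_1 \int g\, dH, \qquad \Var(\langle g, \mu_m\rangle) = \tau_2 \int g^2\, dH,
\]
so applying this to $g = f-\bar f$ gives $\E[X_m]=0$ and $\Var(X_m) = \tau_2 \sigma_f^2$, where $\sigma_f^2 := \int (f-\bar f)^2\, dH = \Var_H(f)$. An analogous computation yields $\Var(X_\Delta) = \Delta \tau_2 \sigma_f^2$, so the total variance of the numerator is $(\lfloor a\rfloor + \Delta)\tau_2\sigma_f^2 = a\tau_2\sigma_f^2$, which precisely matches $\Var(\langle \mathbb{B}_H^o,f\rangle) = \sigma_f^2$ after dividing by $\sqrt{a\tau_2}$. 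The assumption $\tilde\tau_3<\infty$ ensures $\tau_k<\infty$ for $k=1,2,3$, and the Poisson-integral moment formulas give $\E|X_m|^3 \le M_3 < \infty$ uniformly in $m$ and $\E|X_\Delta|^3 \le M_3^\Delta < \infty$ uniformly in $\Delta\in[0,1]$, where $M_3, M_3^\Delta$ depend only on $\|f\|_\infty$, $H$, $\tau_1,\tau_2,\tau_3$.

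Finally, assuming for now the nondegenerate case $\sigma_f^2>0$, apply the non-i.i.d.\ Berry--Esseen inequality to the independent, centered random variables $X_1,\ldots,X_{\lfloor a\rfloor},X_\Delta$, with $s^2 := a\tau_2\sigma_f^2$:
\[
\sup_{u\in\mathbb{R}} \Bigl|\,\PP\Bigl(\tfrac{1}{s}\bigl(\textstyle\sum_{m=1}^{\lfloor a\rfloor} X_m + X_\Delta\bigr)\le u\Bigr) - \Phi(u)\Bigr| \;\le\; \frac{C\bigl(\lfloor a\rfloor M_3 + M_3^\Delta\bigr)}{s^3} \;\le\; \frac{C'(H,\tau_2,\tau_3,f)}{\sqrt{a}}.
\]
Setting $u = t/(\sqrt{\tau_2}\,\sigma_f \sqrt{a}/\sqrt{a}) \cdot \sigma_f^{-1}$, i.e.\ making the change of variable $t = \sigma_f u$, and noting that $\langle \mathbb{B}_H^o, f\rangle \sim N(0,\sigma_f^2)$ has CDF $\Phi(t/\sigma_f)$, yields the desired bound $\sup_{t}|F_{\langle T_a, f\rangle}(t) - F_{\langle \mathbb{B}_H^o, f\rangle}(t)| \le C(H,\tau_2,\tau_3)/\sqrt{a}$. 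In the degenerate case $\sigma_f^2=0$, both $\langle T_a, f\rangle$ and $\langle \mathbb{B}_H^o, f\rangle$ vanish almost surely and the inequality is trivial. The main (minor) technical point is verifying that the second and third moments of the Poisson stochastic integrals $\int(f-\bar f)\,d\mu_m$ and $\int (f-\bar f)\,d\mu_\Delta$ have the claimed explicit form with the right scaling in $\Delta$; this is a direct cumulant expansion from \eqref{eq: laplace} and does not present a real obstacle.
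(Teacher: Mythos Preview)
Your proof is correct and follows essentially the same strategy as the paper: use the decomposition $\tilde\mu_a=\sum_{m=1}^{\lfloor a\rfloor}\mu_m+\mu_\Delta$ to write $\langle T_a,f\rangle$ as a normalized sum of independent centered variables with finite third moments, then apply the non-i.i.d.\ Berry--Esseen theorem. The only difference is that the paper first approximates $f$ by a step function $\sum_{j=1}^M\beta_j\mathbbm{1}_{A_j}$ (with error $\epsilon<O(1/a^2)$) and applies Berry--Esseen to the resulting finite linear combination, whereas you integrate $f-\bar f$ directly against each $\mu_m$; your route is slightly cleaner since it bypasses the step-function approximation and the attendant worry about $M$ depending on $a$.
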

\begin{proof}
Since $f$ can be approximated by step functions $\sum_{j=1}^M \beta_j \mathbbm{1}_{A_j}$ with arbitrary error $\epsilon>0$. Therefore, we choice the error $\epsilon < O(\frac{1}{a^2})$, then it is sufficient to show 
\begin{align}
\sup_{t \in \mathbb{R}} \left| \mathbb{P}\lc \sum_{j=1}^M \beta_j T_a(A_j) <t \rc - \mathbb{P}\lc \sum_{j=1}^M \beta_j \mathbb{B}_H^o(A_j) <t \rc \right|\leq \frac{C(H, \tau_3, \tau_2)}{ \sqrt{a}}\,.\label{eq: 6.23}
\end{align}
Let $W=\sum_{j=1}^M \beta_j T_a(A_j)$, then by the decomposition \eqref{e.2.8}, we have
\begin{align*}
W&=\sum_{j=1}^M \beta_j \frac{\tilde{\mu}_a(\cdot)-H(\cdot)\tilde{\mu}_a(\mathbb{X})}{\sqrt{a\tau_2}}\\
&=\sum_{j=1}^M \beta_j \frac{1}{\sqrt{a \tau_2}} \lk \sum_{k=1}^{\lfloor a \rfloor} \lc \mu_k(A_j) -H(A_j)\mu_k(\mathbb{X}) \rc + \lc \mu_{\Delta}(A_j)+H(A_j)\mu_{\Delta}(\mathbb{X}) \rc \rk\\
&=\sum_{k=0}^{\lfloor a \rfloor} w_k\,,
\end{align*}
where $w_k=\sum_{j=1}^M \beta_j \frac{1}{\sqrt{a \tau_2}} \lc \mu_k(A_j) -H(A_j)\mu_k(\mathbb{X}) \rc$ for $k \in \{1,\cdots, \lfloor a \rfloor\}$ and\\
 $w_0=\sum_{j=1}^M \beta_j \frac{1}{\sqrt{a \tau_2}}\lc \mu_{\Delta}(A_j)+H(A_j)\mu_{\Delta}(\mathbb{X}) \rc$.

Since $\{\mu_k\}_{k=1}^{\lfloor a \rfloor}$ and $\mu_{\Delta}$ are independent, $\{w_k\}_{k=0}^{\lfloor a \rfloor}$ is an independent sequence of random variables with mean $0$. The random variables $\{w_k\}_{k=0}^{\lfloor a \rfloor}$ have finite second and third moment if $\tilde{\tau}_3<\infty$. Therefore, the result \eqref{eq: 6.23} can be obtained by applying the Berry-Esseen theorem for independent random variables (see e.g. Theorem 3.6 in \citet{chen2010normal}).
\end{proof}
Now, we are ready to proof Theorem \ref{thm: berry-esseen bound}.

\begin{proof}\label{proof: berry-esseen thm}

(i) By the definition of $T_a$, $V_a$ and $Q_{H,a}$, we have $\langle Q_{H,a}, f \rangle=\frac{1}{V_a} \langle T_a, f\rangle$. For any $t > 0$,
\begin{align}
F_{\langle Q_{H,a}, f \rangle}(t) &\geq \mathbb{P}\lc \frac{\langle T_a, f\rangle}{V_a} \leq t, |V_a-1|<\epsilon_a\rc-\mathbb{P}\lc |V_a-1|>\epsilon_a \rc\nonumber\\
&\geq \mathbb{P} \lc \frac{\langle T_a, f\rangle}{V_a} \leq t,  |V_a-1|<\epsilon_a, V_a=1 \rc-\mathbb{P}\lc |V_a-1|>\epsilon_a \rc\nonumber\\
&=\mathbb{P}\lc \langle T_a, f\rangle \leq t \rc-\mathbb{P}\lc |V_a-1|>\epsilon_a \rc\,. \label{eq: 6.24}
\end{align}
On the other hand,
\begin{align}
F_{\langle Q_{H,a}, f \rangle}(t)=\mathbb{P}\lc \frac{\langle T_a, f\rangle}{V_a} \leq t\rc &\leq \mathbb{P}\lc \frac{\langle T_a, f\rangle}{V_a} \leq  t, |V_a-1|<\epsilon_a\rc+\mathbb{P}\lc |V_a-1|>\epsilon_a \rc\nonumber\\
&\leq \mathbb{P}\lc \langle T_a, f\rangle \leq (1+\epsilon_a) t \rc+\mathbb{P}\lc |V_a-1|>\epsilon_a \rc\,. \label{eq: 6.25}
\end{align}
We take $\epsilon_a=\frac{1}{\sqrt{a}}$, by Lemma \ref{lemma: rate of V_a}, $\mathbb{P}\lc |V_a-1|>\epsilon_a \rc \leq \frac{\tau_3}{\sqrt{a}\tau_1^3}$. Moreover, by Lemma \ref{lemma: BE of T_a}, we have 
\begin{align*}
&\mathbb{P}\lc \langle T_a, f\rangle \leq  t \rc \geq F_{\bZ}(\bt) -\frac{C(H, \tau_3, \tau_2)}{ \sqrt{a}}\,,\\
&\mathbb{P}\lc \langle T_a, f\rangle \leq (1+\epsilon_a) t \rc \leq F_{\bZ}((1+\epsilon_a) t) +\frac{C( H, \tau_3, \tau_2)}{ \sqrt{a}}=F_{\langle \mathbb{B}_H^o, f\rangle}( t)+O \lc \epsilon_a\rc +\frac{C(H, \tau_3, \tau_2)}{ \sqrt{a}}\,.
\end{align*}
Combine the above inequality with \eqref{eq: 6.24} and \eqref{eq: 6.25}, we have 
\begin{align}
|F_{\langle T_a, f\rangle}(t)-F_{\langle \mathbb{B}_H^o, f\rangle}( t)|\leq \frac{c_1}{\sqrt{a}}\,,
\end{align}
for any $t> 0$ and some constant $c_1$ that depends on $H$, $\tau_2$ and $\tau_3$. This completes the proof of result (i) in Theorem \ref{thm: berry-esseen bound}.

(ii) Recall that $D_a(\cdot)=\frac{R_a(\cdot)}{V_a}$, where 
\begin{align*}
&R_a(\cdot)=\frac{\tilde{\mu}_a(\cdot)-H(\cdot)\tilde{\mu}_a(\mathbb{X})}{\sqrt{a\tau_2H(\cdot)(1-H(\cdot))}}\,,\\
&V_a=\frac{\tilde{\mu}_a(\mathbb{X})}{a\tau_1}\,.
\end{align*}

Thus, $\mathbf{D}_a:=(D_a(A_1),\cdots, D_a(A_n))=\frac{\bR_a}{V_a}$. 

For any $\bt > \mathbf{0}$, we can have the lower and upper bound of the distribution function $F_{\mathbf{D}_a}(\bt)$ as follows.
\begin{align}
F_{\mathbf{D}_a}(\bt)=\mathbb{P}\lc \frac{\bR_a}{V_a} \leq \bt\rc &\geq \mathbb{P}\lc \frac{\bR_a}{V_a} \leq \bt, |V_a-1|<\epsilon_a\rc-\mathbb{P}\lc |V_a-1|>\epsilon_a \rc\nonumber\\
&\geq \mathbb{P}\lc \frac{\bR_a}{V_a} \leq \bt, |V_a-1|<\epsilon_a, V_a=1 \rc-\mathbb{P}\lc |V_a-1|>\epsilon_a \rc\nonumber\\
&=\mathbb{P}\lc \bR_a \leq \bt \rc-\mathbb{P}\lc |V_a-1|>\epsilon_a \rc\,. \label{eq: lower bound}
\end{align}
On the other hand, we have
\begin{align}
F_{\mathbf{D}_a}(\bt)=\mathbb{P}\lc \frac{\bR_a}{V_a} \leq\bt\rc &\leq \mathbb{P}\lc \frac{\bR_a}{V_a} \leq \bt, |V_a-1|<\epsilon_a\rc+\mathbb{P}\lc |V_a-1|>\epsilon_a \rc\nonumber\\
&\leq \mathbb{P}\lc \bR_a \leq (1+\epsilon_a)\bt \rc+\mathbb{P}\lc |V_a-1|>\epsilon_a \rc\,. \label{eq: upper bound}
\end{align}
We shall take $\epsilon_a=\frac{1}{\sqrt{a}}$, by Lemma \ref{lemma: rate of V_a}, $\mathbb{P}\lc |V_a-1|>\epsilon_a \rc \leq \frac{\tau_3}{\sqrt{a}\tau_1^3}$. On the other hand, by Lemma \ref{lemma: difference of characters}, we have 
\begin{align*}
&\mathbb{P}\lc \bR_a \leq \bt \rc \geq F_{\bZ}(\bt) -\frac{C(n,H, \tau_3, \tau_2)}{ \sqrt{a}}\,,\\
&\mathbb{P}\lc \bR_a \leq (1+\epsilon_a)\bt \rc \leq F_{\bZ}((1+\epsilon_a)\bt) +\frac{C(n,H, \tau_3, \tau_2)}{ \sqrt{a}}=F_{\bZ}(\bt)+O \lc \epsilon_a\rc +\frac{C(n,H, \tau_3, \tau_2)}{ \sqrt{a}}\,.
\end{align*}
Combine the above inequality with \eqref{eq: lower bound} and \eqref{eq: upper bound}, we have 
\begin{align}
|F_{\mathbf{D}_a}(\bt)-F_{\bZ}(\bt)|\leq \frac{c_2}{\sqrt{a}}\,,
\end{align}
for any $\bt> \mathbf{0}$ and some constant $c_2$ that depends on $n$, $H$, $\tau_2$ and $\tau_3$.
Thus, the results \eqref{eq: berry-esseen} follows immediately.

\end{proof}

\end{document}